\theoremstyle{plain}
\newtheorem{theorem}{Theorem}[section]
\newtheorem{lemma}[theorem]{Lemma}
\newtheorem{proposition}[theorem]{Proposition}
\newtheorem{corollary}[theorem]{Corollary}
\theoremstyle{definition}
\newtheorem{example}[theorem]{Example}
\theoremstyle{remark}
\begin{document}
	\title[Wiener index of the Cozero-divisor graph of a finite commutative ring]{Wiener index of the Cozero-divisor graph of a finite commutative ring}
	%\maketitle
	%\section*{Introduction}
	%******************
%	\title[Wiener index]{Wiener index}
	\author[Barkha Baloda, Praveen Mathil, Jitender Kumar, Aryan Barapatre]{Barkha Baloda, Praveen Mathil, $\text{Jitender Kumar}^{^*}$, Aryan Barapatre}
	\address{Department of Mathematics, Birla Institute of Technology and Science Pilani, Pilani, India}
	\email{barkha0026@gmail.com, maithilpraveen@gmail.com,  jitenderarora09@gmail.com, aryanbar@gmail.com}

	%\date{...}
\begin{abstract}
Let $R$ be a ring with unity. The cozero-divisor graph of a ring $R$, denoted by
$\Gamma'(R)$, is an undirected simple graph whose vertices are the set of all
non-zero and non-unit elements of $R$, and two distinct vertices $x$ and $y$  are
adjacent if and only if $x \notin Ry$ and $y \notin Rx$. In this article, we extend some of the results of \cite{a.mathil2022cozero} to an arbitrary ring. In this connection, we derive a closed-form formula of the Wiener index of the cozero-divisor graph of a finite commutative ring $R$. As applications, we compute the Wiener index of $\Gamma'(R)$, when either $R$ is the product of ring of integers modulo $n$ or a reduced ring. At the final part of this paper, we provide a SageMath code to compute the Wiener index of the cozero-divisor graph of these class of rings including the ring $\mathbb{Z}_{n}$ of integers modulo $n$.
\end{abstract}

\subjclass[]{05C25, 05C50}

\keywords{Cozero-divisor graph, Wiener index, reduced ring, ring of integer modulo $n$  \\ *  Corresponding author}

\maketitle

\section{Introduction and Preliminaries}
%%%%%%%%%%%%%%%%%%%%%%%%%%%%%%%%%%%%%%%%%%%%%%%%%%%%%%%%%%%%%%%%%%%%%%%%%%%%%%%%%%%%%%%%%%%%%%%%%%%%%%%%

The Wiener index is one of the most frequently used topological indices in chemistry as a molecular shape descriptor. This was first used by H. Wiener in 1947 and then the formal definition of the Wiener index was introduced by Hosaya \cite{hosoya1971topological}. The \emph{Wiener index} of a graph is defined as the sum of the lengths of the shortest paths between all pairs of vertices in a graph.  Other than the chemistry, the Wiener index was used to find various applications in quantitative structure-property relationships (see \cite{karelson2000molecular}). The Wiener index was also employed in crystallography, communication theory, facility location, cryptography etc. (see \cite{bonchev2002wiener,gutman1993some, nikolic1995wiener}). An application of the Wiener index has been established in water pipeline network which is essential for water supply management (see \cite{dinar2022wiener}).  Other utilization of the Wiener index can be found in \cite{dobrynin2001wiener, janezic2015graph,wiener1947structural,xu2014survey} and reference therein.

The idea of associating a graph with a ring structure was first emerged in \cite{beck1988coloring}. Then various graphs associated with rings have been studied extensively in the literature, viz. inclusion ideal graph \cite{akbari2015inclusion}, total graph \cite{anderson2008total}, zero-divisor graph \cite{anderson1999zero}, annihilating-ideal graph \cite{behboodi2011annihilating, jalali2022strong}, co-maximal graph \cite{maimani2008comaximal}, etc. Afkhami \emph{et al.} \cite{afkhami2011cozero} introduced the cozero-divisor graph of a commutative ring, in which they have studied the basic graph-theoretic properties including completeness, girth, clique number, etc. of the cozero-divisor graph. They also studied the relations between the zero-divisor graph and the cozero-divisor graph. The cozero-divisor graph of a ring $R$ with unity, denoted by $\Gamma'(R)$, is an undirected simple graph whose vertex set is  the set of all non-zero and non-unit elements of $R$ and two distinct vertices $x$ and $y$  are adjacent if and only if $x \notin Ry$ and $y \notin Rx$. The complement of the cozero-divisor graph and the characterization of the commutative rings with forest, star, or unicyclic cozero-divisor graphs have been investigated in \cite{afkhami2012cozero}. Akbari \emph{et al.} \cite{akbari2014some} studied the cozero-divisor graph associated to the polynomial ring and the ring of power series. Some of the work associated with the cozero-divisor graphs of rings can be found in \cite{afkhami2012planar, afkhami2013cozero, akbari2014commutative, bakhtyiari2020coloring, mallika2017rings, a.mathil2022cozero, nikandish2021metric}.

Over the recent years, the Wiener index of certain graphs associated with rings have been studied by various authors.  The Wiener index of the zero divisor graph of the ring $\mathbb{Z}_n$ of integers modulo $n$ has been studied in \cite{a.asir2022wiener}. Recently, Selvakumar \emph{et al.} \cite{a.selva2022generalwiener} calculated the Wiener index of the zero divisor graph for a finite commutative ring with unity. The Wiener index of the cozero-divisor graph of the ring $\mathbb{Z}_n$ has been obtained in \cite{a.mathil2022cozero}. In order to extend the results of \cite{a.mathil2022cozero} to an arbitrary ring, we study the Wiener index of the cozero-divisor graph of a finite commutative ring with unity. First, we provide the necessary results and notations used throughout the paper. The remaining paper is arranged as follows: In Section 2, a closed-form formula of the Wiener index of the cozero-divisor graph of a finite commutative ring with unity is presented. In Section 3, we obtain the Wiener index of the cozero-divisor graph of the product of a ring of integers modulo $n$. In Section 4, we calculate the Wiener index of the cozero-divisor graph of a finite commutative reduced ring. In Section 5, we derive a SageMath code to compute the Wiener index of the cozero-divisor graph of various classes of rings. 

Now we recall necessary definitions, results and notations of graph theory from \cite{westgraph}. A graph $\Gamma$ is a pair  $ \Gamma = (V, E)$, where $V = V(\Gamma)$ and $E = E(\Gamma)$ are the set of vertices and edges of $\Gamma$, respectively. Let $\Gamma$ be a graph. Two distinct vertices $x, y \in \Gamma$ are $\mathit{adjacent}$, denoted by $x \sim y$, if there is an edge between $x$ and $y$. Otherwise, we denote it by $x \nsim y$.
 A \emph{subgraph} $\Gamma'$ of a graph $\Gamma$ is a graph such that $V(\Gamma') \subseteq V(\Gamma)$ and $E(\Gamma') \subseteq E(\Gamma)$. If $U \subseteq V(\Gamma)$ then the subgraph of $\Gamma$ induced by $U$, denoted by $\Gamma(U)$, is the graph with vertex set $U$ and two vertices of $\Gamma(U)$ are adjacent if and only if they are adjacent in $\Gamma$. The \emph{complement} $\overline{\Gamma}$ of $\Gamma$ is a graph with same vertex set as $\Gamma$ and distinct vertices $x, y$ are adjacent in $\overline{\Gamma}$ if they are not adjacent in $\Gamma$. A graph $\Gamma$ is said to be $complete$ if every two distinct vertices are adjacent. The complete graph on $n$ vertices is denoted by $K_n$. A path in a graph is a sequence of distinct vertices with the property that each vertex in the sequence is adjacent to the next vertex of it. The graph $\Gamma$ is said to be \emph{connected} if there is path between every pair of vertex. The distance between any two vertices $x$ and $y$ of $\Gamma$, denoted by $d(x,y)$ (or $d_{\Gamma}(x,y)$), is the number of edges in a shortest path between $x$ and $y$. The Wiener index is defined as the sum of all distances between every pair of vertices in the graph that is the Wiener index of a graph $\Gamma$ is given by
\[ W(\Gamma) = \dfrac{1}{2}\sum_{u \in V(\Gamma)}\sum_{v \in V(\Gamma)} d(u,v) \]
 
Let $\Gamma_1, \Gamma_2, \ldots, \Gamma_k$ be $k$ pairwise disjoint graphs. Then the \emph{generalised join graph} $\Gamma[\Gamma_1, \Gamma_2, \ldots, \Gamma_k]$ of $\Gamma_1, \Gamma_2, \ldots, \Gamma_k$ is the graph formed by replacing each vertex $u_i$ of $\Gamma$ by $\Gamma_i$ and then joining each vertex of $\Gamma_i$ to every vertex of $\Gamma_j$ whenever $u_i \sim u_j$ in $\Gamma$ (cf. \cite{schwenk1974computing}). The set of zero-divisors and the set of units of the ring $R$ is denoted by $Z(R)$ and $U(R)$, respectively. The  set of all nonzero elements of $R$ is denoted by $R^{*}$. For $x \in R$, we write $(x)$ as the principal ideal generated by $x$. For a positive integer $k$, we write $[k] = \{ 1,2, \ldots, k \} $.

%%%%%%%%%%%%%%%%%%%%%%%%%%%%%%%%%%%%%%%%%%%%%%%%%%%%%%%%%%%%%%%%%%%%%%%%%%%%%%%%%%%%%%%%%%%%%%%%%%%%%%%%%%%%%%%%%%%%%%%%%%%%%%%%%%%%%%%%%%%%%%%%%%%%%%%%%%%%%%%%
\section{Formulae for The Wiener index of the cozero-divisor graph of a finite commutative ring}

The purpose of this section is to provide the closed-form formula of the Wiener index of the cozero-divisor graph of a finite commutative ring. Let $R$ be a finite commutative ring with unity. Define a relation $\equiv$ on $V(\Gamma'(R))$ such that  $x \equiv y$ if and only if $(x) = (y)$. Note that the relation $\equiv$ is an equivalence relation. Let $x_1, x_2, \ldots, x_k$ be the representatives of the equivalence classes of $X_1, X_2, \ldots, X_k$ respectively,  under the relation $\equiv$. We begin with the following lemma.

%Notice that $V(\Gamma'(R)) =\displaystyle \bigcup_{i=1}^{k}X_i$. 

\begin{lemma}\label{adjacencyofclasses}
A vertex of $X_i$ is adjacent to a vertex of $X_j$ if and only if $(x_i) \nsubseteq (x_j)$ and $(x_j) \nsubseteq (x_i)$.
\end{lemma}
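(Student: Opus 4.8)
The plan is to show that the adjacency of two entire classes $X_i$ and $X_j$ is governed solely by the ideal-containment relation between the representatives $x_i$ and $x_j$, and then conclude that either every vertex of $X_i$ is joined to every vertex of $X_j$, or none is. First I would observe that for any $a \in X_i$ we have $(a) = (x_i)$, and likewise $(b) = (x_j)$ for $b \in X_j$; this is immediate from the definition of the equivalence relation $\equiv$. Next I would unwind the definition of adjacency in $\Gamma'(R)$: the distinct vertices $a$ and $b$ are adjacent precisely when $a \notin Rb$ and $b \notin Ra$, i.e. $a \notin (b)$ and $b \notin (a)$. Since $a \in (a) = (x_i)$ and $b \in (b) = (x_j)$, the condition $a \notin (b)$ is equivalent to $(x_i) = (a) \nsubseteq (b) = (x_j)$ (if $(x_i) \subseteq (x_j)$ then $a \in (x_j)$, a contradiction; conversely if $a \notin (x_j)$ then certainly $(x_i) \nsubseteq (x_j)$), and symmetrically $b \notin (a)$ is equivalent to $(x_j) \nsubseteq (x_i)$.

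Combining these two equivalences gives: $a \sim b$ in $\Gamma'(R)$ if and only if $(x_i) \nsubseteq (x_j)$ and $(x_j) \nsubseteq (x_i)$. The right-hand condition does not mention $a$ or $b$ at all, only the representatives, so it holds for one pair $(a,b) \in X_i \times X_j$ if and only if it holds for all such pairs. (One should note in passing that when $i \neq j$, the representatives satisfy $(x_i) \neq (x_j)$, so "a vertex of $X_i$ adjacent to a vertex of $X_j$" is unambiguous, and when the ideal-containment condition fails, it fails because one ideal is contained in the other.) This yields the stated lemma.

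The argument is essentially a direct unwinding of definitions, so there is no serious obstacle; the only point requiring a moment's care is the logical equivalence "$a \notin (x_j) \iff (x_i) \nsubseteq (x_j)$", where the forward direction is trivial and the reverse uses that $a$ generates $(x_i)$. I would state this cleanly once and use it on both sides by symmetry. A minor subtlety worth acknowledging is the case $i = j$: distinct vertices inside the same class $X_i$ are never adjacent, consistent with the formula since $(x_i) \subseteq (x_i)$, so the lemma as phrased for $i \neq j$ loses nothing, but I would keep the statement general since it costs nothing and is used later when analyzing the induced structure on each $X_i$.
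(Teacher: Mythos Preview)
Your proposal is correct and follows essentially the same approach as the paper: both arguments reduce adjacency of $a\in X_i$ and $b\in X_j$ to the ideal-containment condition on $(x_i)$ and $(x_j)$ via the observation that $(a)=(x_i)$ and $(b)=(x_j)$. Your write-up is more detailed than the paper's (which dispatches the converse in a single sentence) and includes the useful remark about the $i=j$ case and the uniformity across all pairs, but the underlying idea is identical.
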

\begin{proof}
 Suppose $a \in X_i$ and $b \in X_j$. Then $(a) = (x_i)$ and $(b) = (x_j)$ in $R$. If $a \sim b$ in $\Gamma'(R)$, then $(a) \not\subset (b)$ and $(b) \not\subset (a)$. It follows that $(x_i) \not\subset (x_j)$ and $(x_j) \not\subset (x_i)$. The converse holds by the definition of $\Gamma'(R)$. 
\end{proof}

\begin{corollary}\label{corllary_wiener_adjacency}
\begin{itemize}
\item[(i)] For $i \in \{1, 2,\ldots, k\}$, the induced subgraph $\Gamma'(X_i)$ of $\Gamma'(R)$ is isomorphic to $\overline{K}_{|X_i|}$.

\item[(ii)] For distinct $i,j \in \{1, 2, \ldots, k\}$, a vertex of $X_i$ is adjacent to either all or none of the vertices of $X_j$. 
\end{itemize}
\end{corollary}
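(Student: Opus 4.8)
Both parts follow quickly from Lemma~\ref{adjacencyofclasses} together with the definition of $\Gamma'(R)$, so the plan is essentially bookkeeping rather than the resolution of a genuine obstacle. For part~(i), I would fix $i$ and take two \emph{distinct} vertices $a,b\in X_i$. By the definition of the equivalence relation $\equiv$ we have $(a)=(x_i)=(b)$, hence $a\in(b)=Rb$ (and symmetrically $b\in Ra$), so $a$ and $b$ fail the adjacency criterion of $\Gamma'(R)$ and are therefore non-adjacent. Since this holds for every pair of distinct vertices of $X_i$, the induced subgraph $\Gamma'(X_i)$ has empty edge set, i.e. $\Gamma'(X_i)\cong\overline{K}_{|X_i|}$. (One could alternatively read this off Lemma~\ref{adjacencyofclasses} with $i=j$, noting that the condition $(x_i)\nsubseteq(x_i)$ is vacuously false; I would flag explicitly that the "distinct vertices" clause in the definition of $\Gamma'(R)$ is what is being used here.)

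For part~(ii), I would argue that by Lemma~\ref{adjacencyofclasses} the adjacency of a vertex of $X_i$ to a vertex of $X_j$ (for $i\neq j$) is controlled exclusively by the pair of principal ideals $\bigl((x_i),(x_j)\bigr)$, which does not depend on the choice of representatives within the classes. Consequently, if some vertex of $X_i$ is adjacent to some vertex of $X_j$, then $(x_i)\nsubseteq(x_j)$ and $(x_j)\nsubseteq(x_i)$, and hence \emph{every} vertex of $X_i$ is adjacent to \emph{every} vertex of $X_j$; and if instead one of these containments holds, then no vertex of $X_i$ is adjacent to any vertex of $X_j$. This is precisely the "all or none" dichotomy.

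I would close the corollary's proof (or a remark immediately after it) by recording the structural consequence that will be used repeatedly in Section~2: letting $\Upsilon$ denote the simple graph on vertices $u_1,\dots,u_k$ with $u_i\sim u_j$ iff $(x_i)\nsubseteq(x_j)$ and $(x_j)\nsubseteq(x_i)$, parts~(i) and~(ii) together show that $\Gamma'(R)$ is the generalised join $\Upsilon[\overline{K}_{|X_1|},\overline{K}_{|X_2|},\ldots,\overline{K}_{|X_k|}]$. The only point requiring care in the write-up is the treatment of the diagonal case $i=j$ and the interplay between Lemma~\ref{adjacencyofclasses} (phrased for classes $X_i,X_j$) and the underlying vertex-level definition of $\Gamma'(R)$; beyond that, there is no substantive difficulty.
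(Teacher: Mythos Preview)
Your proposal is correct and matches the paper's approach: the paper presents this as an immediate corollary of Lemma~\ref{adjacencyofclasses} with no separate proof, and your argument is precisely the routine unpacking of that lemma together with the definition of $\equiv$ and of $\Gamma'(R)$. The closing observation about the generalised join is exactly what the paper records as Proposition~\ref{joinof_wiener}.
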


Define a subgraph $\Upsilon'(R)$ (or $\Upsilon'$) induced by the set $ \{x_1, x_2, \ldots, x_k \}$ of representatives of the respective equivalence classes $X_1, X_2, \ldots, X_k$ under the relation $\equiv$.  

\begin{lemma}\label{connectednessof_Upsilon}
The graph $\Upsilon'(R)$ is connected if and only if the cozero-divisor graph $\Gamma'(R)$ is connected. Moreover, for $a,b \in V(\Gamma'(R))$, we have
 \[ d_{\Gamma'(R)}(a,b) =
  \begin{cases}
 2  & \;\; \; \textnormal{if } a, b \in X_i, \\
d_{\Upsilon'(R)}(x_i, x_j)  & \;\;\; \textnormal{if } a \in X_i, b \in X_j~~ \textnormal{and} ~~ i \neq j.
\end{cases}\]
\end{lemma}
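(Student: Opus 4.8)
The plan is to prove both the biconditional and the distance formula together, leveraging Corollary~\ref{corllary_wiener_adjacency}, which tells us that $\Gamma'(R)$ is (isomorphic to) the generalised join $\Upsilon'(R)[\overline{K}_{|X_1|}, \ldots, \overline{K}_{|X_k|}]$. First I would observe that since each $X_i$ is an equivalence class under $\equiv$, every vertex of $\Gamma'(R)$ lies in exactly one $X_i$, and by part~(ii) of the corollary the adjacency between a vertex $a \in X_i$ and a vertex $b \in X_j$ (with $i \neq j$) depends only on whether $x_i \sim x_j$ in $\Upsilon'(R)$. This ``blow-up'' structure is the engine for everything.

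For the biconditional, one direction is immediate: $\Upsilon'(R)$ is an induced subgraph of $\Gamma'(R)$, so if $\Gamma'(R)$ were disconnected with a vertex of $X_i$ unreachable from a vertex of $X_j$, then projecting a hypothetical path down to representatives would force $x_i$ and $x_j$ to be in the same component of $\Upsilon'$ — contrapositively, disconnectedness of $\Upsilon'$ yields disconnectedness of $\Gamma'(R)$. Conversely, if $\Upsilon'(R)$ is connected, then given any $a \in X_i$ and $b \in X_j$ I would lift a path $x_i = y_0 \sim y_1 \sim \cdots \sim y_m = x_j$ in $\Upsilon'$ to a walk $a \sim y_1 \sim \cdots \sim y_{m-1} \sim b$ in $\Gamma'(R)$ (using part~(ii) at the two ends), which shows $a$ and $b$ are connected; and two vertices in the same class $X_i$ are joined through any vertex of a neighbouring class, which exists because $\Upsilon'$ is connected and has more than one vertex (the degenerate case $k=1$ needs a separate remark — there $\Gamma'(R) = \overline{K}_{|X_1|}$, which is connected only if $|X_1| = 1$, consistent with $\Upsilon'$ being a single connected vertex).

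For the distance formula, the case $a, b \in X_i$ with $a \neq b$: by Corollary~\ref{corllary_wiener_adjacency}(i) they are non-adjacent, so $d(a,b) \geq 2$, and since $\Upsilon'$ is connected with $k \geq 2$ there is some $j \neq i$ with $x_i \sim x_j$, hence $a \sim x_j \sim b$ gives $d(a,b) = 2$. For the case $a \in X_i$, $b \in X_j$, $i \neq j$: I would show $d_{\Gamma'(R)}(a,b) = d_{\Upsilon'(R)}(x_i, x_j)$ by a two-sided inequality. The inequality $\leq$ comes from lifting a geodesic in $\Upsilon'$ to a path of the same length in $\Gamma'(R)$ as above — the only subtlety is that the lifted walk is genuinely a path (no repeated vertices), which holds since consecutive classes along a geodesic are distinct and $a, b$ fall in the endpoint classes. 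For $\geq$, take a shortest $a$--$b$ path in $\Gamma'(R)$; replace each internal vertex by the representative of its class and each endpoint by $x_i$ or $x_j$ respectively, then delete consecutive repetitions (which can only occur between vertices of a common class, and by Lemma~\ref{adjacencyofclasses} such a collapse cannot shorten below the representative-path length), obtaining a walk in $\Upsilon'$ from $x_i$ to $x_j$ of length at most $d_{\Gamma'(R)}(a,b)$.

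The main obstacle I anticipate is the bookkeeping in the $\geq$ direction: after projecting a $\Gamma'(R)$-geodesic to representatives, one must argue carefully that no two consecutive projected vertices coincide in a way that would make the resulting sequence fail to be a walk of controlled length — equivalently, that a geodesic in $\Gamma'(R)$ never uses two vertices of the same class consecutively (true, since they are non-adjacent) and never ``wastes'' steps in a manner invisible to the projection. Handling the edge cases $k = 1$ and, within a class, the possibility $|X_i| = 1$ (so the first case of the formula is vacuous) should be dispatched with a sentence each.
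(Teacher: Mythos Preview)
Your proposal is correct and follows essentially the same strategy as the paper: exploit the blow-up structure from Corollary~\ref{corllary_wiener_adjacency} to lift paths in $\Upsilon'(R)$ to paths in $\Gamma'(R)$, and use a neighbour of $x_i$ in $\Upsilon'(R)$ to connect two vertices of the same class $X_i$. You are in fact more thorough than the paper, which only exhibits the lifted path and asserts the distance equality without separately arguing the $\geq$ direction via your projection argument, and which does not address the degenerate case $k=1$ at all.
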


\begin{proof}
First suppose that $\Upsilon'(R)$ is connected. Let $a, b$ be two arbitrary vertices of $\Gamma'(R)$. We may now suppose that $a \in X_i$ and $b \in X_j$. If $i = j$, then $a \nsim b$ in $\Gamma'(R)$. Since $\Upsilon'(R)$ is connected, we have $x_t \in X_t$ such that $x_i \sim x_t$ in $\Gamma'(R)$. Consequently, $a \sim x_t \sim b$ in $\Gamma'(R)$ and $d_{\Gamma'(R)}(a,b) = 2$. If $a \sim b$, then there is nothing to prove. Let $a \nsim b$ in $\Gamma'(R)$. Connectedness of $\Upsilon'(R)$ implies that there exists a path $x_i \sim x_{i_1} \sim x_{i_2} \sim \cdots \sim x_{i_t} \sim x_j$, where $i \neq j$. It follows that $a \sim x_{i_1} \sim x_{i_2} \sim \cdots \sim x_{i_t} \sim b$ in $\Gamma'(R)$ and $d_{\Gamma'(R)}(a,b) = d_{\Upsilon'(R)}(x_i, x_j)$. Therefore, $\Gamma'(R)$ is connected. The converse is straightforward.
\end{proof}

In view of Corollary \ref{corllary_wiener_adjacency}, we have the following proposition.

\begin{proposition}\label{joinof_wiener}
 Let $\Gamma_i'$ be the subgraph induced by the set $X_i$ in $\Gamma'(R)$. Then $\Gamma'(R) = \Upsilon'[\Gamma_1', \Gamma_2', \ldots, \Gamma_k']$.   
\end{proposition}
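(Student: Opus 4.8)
The plan is to verify directly that the generalised join $\Upsilon'[\Gamma_1', \Gamma_2', \ldots, \Gamma_k']$ and $\Gamma'(R)$ coincide as graphs, i.e. that they have the same vertex set and the same edge set. Since both graphs are simple, it suffices to check these two equalities. For the vertex set, recall that $\Upsilon'$ is the induced subgraph of $\Gamma'(R)$ on the representatives $\{x_1,\ldots,x_k\}$, that each $\Gamma_i'$ is the induced subgraph on $X_i$, and that $\{X_1,\ldots,X_k\}$ is the partition of $V(\Gamma'(R))$ into the equivalence classes of $\equiv$. By the definition of the generalised join (replace the vertex $x_i$ of $\Upsilon'$ by the graph $\Gamma_i'$), its vertex set is $\bigsqcup_{i=1}^{k} X_i = V(\Gamma'(R))$, which settles the vertex set equality.

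For the edges I would split into the two cases that arise in the definition of the generalised join. First, for an edge inside a single block: in the join, the edges within the copy of $\Gamma_i'$ are exactly the edges of $\Gamma_i'$, which by definition are the edges of $\Gamma'(R)$ with both endpoints in $X_i$; so these coincide with the within-block edges of $\Gamma'(R)$ trivially. (In fact Corollary~\ref{corllary_wiener_adjacency}(i) tells us there are none, since $\Gamma_i' \cong \overline{K}_{|X_i|}$, but we do not even need this.) Second, for an edge between two distinct blocks $X_i$ and $X_j$: in the join, a vertex of the copy of $\Gamma_i'$ is joined to a vertex of the copy of $\Gamma_j'$ precisely when $x_i \sim x_j$ in $\Upsilon'$, equivalently when $x_i \sim x_j$ in $\Gamma'(R)$ (as $\Upsilon'$ is an induced subgraph), equivalently — by Lemma~\ref{adjacencyofclasses} — when $(x_i) \nsubseteq (x_j)$ and $(x_j) \nsubseteq (x_i)$, and by the same Lemma this is equivalent to \emph{every} vertex of $X_i$ being adjacent in $\Gamma'(R)$ to \emph{every} vertex of $X_j$. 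Thus the between-block adjacencies of the join match exactly those of $\Gamma'(R)$, and combining the two cases gives $E\big(\Upsilon'[\Gamma_1',\ldots,\Gamma_k']\big) = E(\Gamma'(R))$.

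I do not anticipate a genuine obstacle here; the statement is essentially a repackaging of Lemma~\ref{adjacencyofclasses} and Corollary~\ref{corllary_wiener_adjacency} in the language of the generalised join, and the only care needed is bookkeeping: making sure the vertex sets of the $\Gamma_i'$ are pairwise disjoint (they are, being distinct equivalence classes) so that the generalised join is well defined, and being explicit that "adjacent to either all or none" from Corollary~\ref{corllary_wiener_adjacency}(ii) is exactly what lets the single relation $x_i \sim x_j$ in $\Upsilon'$ dictate the full bipartite connection between $X_i$ and $X_j$. Once both set equalities are in hand, the two graphs are identical and the proposition follows.
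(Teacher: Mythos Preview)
Your proposal is correct and follows essentially the same approach as the paper: the paper does not give an explicit proof, merely stating that the proposition holds ``in view of Corollary~\ref{corllary_wiener_adjacency}'', and your argument is precisely the spelled-out version of that observation, verifying that the partition into classes $X_i$ together with the all-or-none adjacency between blocks yields the generalised join structure.
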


Let $R$ be a finite commutative ring with unity. As a consequence of Lemma \ref{connectednessof_Upsilon} and  Proposition \ref{joinof_wiener}, we have the following theorem.

\begin{theorem}\label{arbitarywiener}
The Wiener index of the cozero-divisor graph $\Gamma'(R)$ of a finite commutative ring with unity is given by
\[
W(\Gamma'(R)) = 2 \sum {|X_i| \choose 2} + \sum_{\substack{i \neq j\\ 1 \leq i < j \leq k}} |X_i||X_j|d_{\Upsilon'(R)}(x_i, x_j),
\]
where, $x_i$ is a representative of the equivalence class $X_i$ under the relation $\equiv$.
\end{theorem}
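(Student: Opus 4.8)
The plan is to compute $W(\Gamma'(R))$ directly from its definition by splitting the double sum over unordered pairs of vertices according to which equivalence classes the two vertices belong to. Every vertex of $\Gamma'(R)$ lies in exactly one class $X_i$, so the set of unordered pairs of distinct vertices partitions into two types: pairs $\{a,b\}$ with $a,b$ in the same class $X_i$, and pairs $\{a,b\}$ with $a \in X_i$, $b \in X_j$ for some $i \neq j$. Accordingly I would write
\[
W(\Gamma'(R)) = \sum_{i=1}^{k} \sum_{\{a,b\} \subseteq X_i} d_{\Gamma'(R)}(a,b) \;+\; \sum_{1 \leq i < j \leq k} \;\sum_{\substack{a \in X_i \\ b \in X_j}} d_{\Gamma'(R)}(a,b).
\]

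For the first sum, Corollary \ref{corllary_wiener_adjacency}(i) tells us each $\Gamma'(X_i)$ is edgeless, and Lemma \ref{connectednessof_Upsilon} gives $d_{\Gamma'(R)}(a,b) = 2$ for any two distinct $a,b \in X_i$ (using that $\Gamma'(R)$, hence $\Upsilon'(R)$, is connected, so each $x_i$ has a neighbour outside its class). There are $\binom{|X_i|}{2}$ such pairs in $X_i$, contributing $2\binom{|X_i|}{2}$; summing over $i$ yields the term $2\sum \binom{|X_i|}{2}$. For the second sum, Lemma \ref{connectednessof_Upsilon} gives $d_{\Gamma'(R)}(a,b) = d_{\Upsilon'(R)}(x_i,x_j)$ whenever $a \in X_i$, $b \in X_j$ with $i \neq j$; this value depends only on $i$ and $j$, not on the particular representatives $a,b$, so the inner double sum is simply $|X_i|\,|X_j|\,d_{\Upsilon'(R)}(x_i,x_j)$. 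Summing over all pairs $i < j$ gives the second term of the claimed formula, and adding the two contributions completes the proof.

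There is essentially no obstacle here: the real work was already done in Lemma \ref{connectednessof_Upsilon} and Corollary \ref{corllary_wiener_adjacency}, which together pin down every pairwise distance in $\Gamma'(R)$ in terms of the (generally much smaller) quotient graph $\Upsilon'(R)$. The only point requiring a word of care is the well-definedness in the same-class case: one must note that since $\Gamma'(R)$ is connected and $k \geq 2$ (so that $\Upsilon'(R)$ has an edge and thus every class has a neighbouring class), the distance-$2$ claim of Lemma \ref{connectednessof_Upsilon} genuinely applies — if $\Gamma'(R)$ were empty or a single class the statement would be vacuous or trivial. Beyond that, the argument is a routine bookkeeping rearrangement of the defining double sum, so I would present it concisely.
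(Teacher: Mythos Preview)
Your proposal is correct and follows essentially the same approach as the paper: the paper itself presents Theorem~\ref{arbitarywiener} without a detailed argument, stating only that it is a consequence of Lemma~\ref{connectednessof_Upsilon} and Proposition~\ref{joinof_wiener}, and your write-up simply makes this consequence explicit by partitioning the vertex pairs according to their equivalence classes and invoking the distance formula of Lemma~\ref{connectednessof_Upsilon}. Your use of Corollary~\ref{corllary_wiener_adjacency} in place of Proposition~\ref{joinof_wiener} is immaterial, since the latter is an immediate repackaging of the former.
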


In the subsequent sections, we use Theorem \ref{arbitarywiener} to derive the Wiener index of the cozero-divisor graph $\Gamma'(R)$ of various class of rings.

%%%%%%%%%%%%%%%%%%%%%%%%%%%%%%%%%%%%%%%%%%%%%%%%%%%%%%%%%%%%%%%%%%%%%%%%%%%%%%%%%%%%%%%%%%%%%%%%%%%%%%%%%%%%%%%%%%%%%%%%
\section{Wiener Index of the cozero-divisor graph of the product of ring of integers modulo $n$}

In this section, we obtain the Wiener index of the cozero-divisor graph $\Gamma'(R)$, when $R \cong \mathbb{Z}_{n_{1}} \times \mathbb{Z}_{n_{2}} \times \cdots \times \mathbb{Z}_{n_{k}}$ or $R \cong \mathbb{Z}_{p_1^{m_1}} \times \mathbb{Z}_{p_2^{m_2}} \times \cdots \times \mathbb{Z}_{p_k^{m_k}}$. For a positive integer $n$, let $d_1, d_2, \ldots, d_t$ be the proper divisors of $n$. Define $\mathcal{A}_{d_i} = \{ x \in V(\Gamma'(\mathbb{Z}_{n}) : \text{gcd}(x,n) =d_i \}$. Moreover, $|\mathcal{A}_{d_{i}}| = \phi(\frac{n}{d_i})$ (cf. \cite{young2015adjacency}) where $\phi$ is the Euler-totient function. Observe that $\mathcal{A}_{d_i}$'s are the equivalence classes of the relation $\equiv$ for the ring $\mathbb{Z}_n$. Now for each $\mathbb{Z}_{p_i^{m_i}}$, let $X_i^{0}, X_i^{1}, \ldots, X_i^{m_i}$ be the corresponding equivalence classes, where $X_{i}^{0} = \{0 \}$, $X_{i}^{1} = U(\mathbb{Z}_{p_i^{m_i}})$ and  $X_{i}^{j} = \mathcal{A}_{p^{j-1}}$ for $2\leq j \leq m_i$. Now we have
\[ |X_i^j|=
 \begin{cases}
 1  & \;\; \; \text{if } j = 0 ,\\
p_i^{m_i} - p_i^{m_i -1} & \;\;\; \text{if } j = 1,\\
p_i^{m_i- j +1} - p_i^{m_i -j}  & \;\; \; \text{if } 2\leq j \leq m_i.
\end{cases}
\]

Let $x = (x_1,x_2, \ldots, x_r, \ldots, x_k)$ and $y = (y_1, y_2, \ldots, y_r, \ldots, y_k) \in R$. Notice that $(x) = (y)$ if and only if  $(x_i) = (y_i)$ for each $i$. It follows that the equivalence classes of the ring $R$ is of the form $X_1^{j_1} \times X_2^{j_2} \times \cdots \times X_k^{j_k}$. Consequently, $|X_1^{j_1} \times X_2^{j_2} \times \cdots \times X_k^{j_k}| = \prod_{i=1}^{k} X_i^{j_i}$.

\begin{lemma}\label{distanceinproductofZ_n}
Let $R \cong \mathbb{Z}_{n_{1}} \times \mathbb{Z}_{n_{2}} \times \cdots \times \mathbb{Z}_{n_{k}}$ and let $x = (x_1,x_2, \ldots, x_r, \ldots, x_k)$, $y = (y_1, y_2, \ldots, y_r, \ldots, y_k) \in R$. Define $S_r = \{(x,y) : ~~ x_r, y_r \in Z(\mathbb{Z}_{n_r})^{*} ~ \text{and }~ (x_r) \subseteq (y_r), x_i = 0, y_i \in U(\mathbb{Z}_{n_i})~ \text{for each } ~i \neq r\}$. Then
    
  \[ d_{\Gamma'(R)}(x,y) =
  \begin{cases}
 1  & \;\; \; \textnormal{if } x \sim y, \\
2 & \;\;\; \textnormal{if } x \nsim y ~~ \text{and }~~ (x,y) \notin S_r,\\
3  & \;\; \; \textnormal{if } (x,y) \in S_r.
\end{cases}
\]
\end{lemma}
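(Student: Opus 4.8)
The plan is to establish the three cases of the distance formula by analysing when two vertices of $\Gamma'(R)$ are adjacent, when they are at distance $2$, and finally by isolating the exceptional pairs forming $S_r$ that genuinely require distance $3$.

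First I would settle the trivial case: $d_{\Gamma'(R)}(x,y)=1$ precisely when $x\sim y$, which is the definition. The substance is the dichotomy between distance $2$ and distance $3$. Since $\Gamma'(R)$ is known to be connected for such a product ring, every non-adjacent pair is at distance at least $2$. So I would aim to show that for \emph{every} non-adjacent pair $(x,y)$ outside $S_r$ there is a common neighbour, i.e. a non-zero non-unit $z$ with $z\sim x$ and $z\sim y$, giving $d=2$; and then show that pairs in $S_r$ admit no common neighbour but do admit a path of length $3$.

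For the distance-$2$ claim, I would argue componentwise. Write the adjacency condition in the product ring: $x\sim y$ iff there is some coordinate $r$ with $x_r\notin (y_r)$ and some coordinate $s$ with $y_s\notin (x_s)$ (possibly $r=s$). If $x\nsim y$, then either $(x)\subseteq (y)$ or $(y)\subseteq (x)$ coordinatewise (after swapping names, say $(x)\subseteq(y)$, so $(x_i)\subseteq(y_i)$ for all $i$). Now I would try to construct $z=(z_1,\dots,z_k)$ that is non-zero, non-unit, with $(z)\nsubseteq(x)$, $(x)\nsubseteq(z)$, $(z)\nsubseteq(y)$, $(y)\nsubseteq(z)$. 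The natural candidates are built by mixing: on some coordinates take $z_i$ a unit (to kill $(y)\subseteq(z)$ and $(x)\subseteq(z)$), on some take $z_i=0$ (to kill $(z)\subseteq(x)$, $(z)\subseteq(y)$), arranging that $z$ is neither globally zero nor globally a unit. The delicate point is that such a $z$ fails to exist exactly in a degenerate configuration: when $(x)\subsetneq(y)$, $x$ and $y$ differ in essentially one coordinate $r$ where $(x_r)\subsetneq(y_r)$ with $x_r,y_r$ both zero-divisors, and in every other coordinate $x_i=0$ while $y_i$ is a unit — this is exactly the defining condition of $S_r$. I would verify by a short case check that in this configuration any prospective common neighbour $z$ is forced either to have $(z)\subseteq(x)$ or $(y)\subseteq(z)$, so no common neighbour exists, hence $d(x,y)\ge 3$; and then exhibit an explicit length-$3$ path $x\sim u\sim v\sim y$ (for instance stepping through a vertex with a unit in coordinate $r$ and suitable entries elsewhere, then to another), using connectedness of $\Upsilon'(R)$ or Lemma \ref{connectednessof_Upsilon} to guarantee diameter at most $3$ here. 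One must also confirm the roles of $x$ and $y$ (which is inside which ideal) match the asymmetric definition of $S_r$, i.e. $(x_r)\subseteq(y_r)$ with the zeros on $x$'s side.

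The main obstacle I anticipate is the exhaustive but subtle bookkeeping in showing that \emph{no} common neighbour exists for $(x,y)\in S_r$ and, conversely, that a common neighbour \emph{always} exists once the pair escapes the $S_r$ pattern — one has to rule out every way the "mixing" construction could fail, which amounts to showing the only obstruction is the $S_r$ configuration. A secondary point requiring care is that $\Gamma'(R)$ has the right global structure (e.g. that $k\ge 2$, or handling small cases like one factor being $\mathbb{Z}_2$ where $Z(\mathbb{Z}_{n_r})^*$ may be empty, so $S_r$ is vacuous there). I would organise the proof as: (1) reduce to the coordinatewise containment picture; (2) prove existence of a common neighbour whenever $(x,y)\notin\bigcup_r S_r$, by an explicit construction with a short finite case analysis; (3) prove non-existence of a common neighbour and existence of a $3$-path for $(x,y)\in S_r$; and (4) conclude using connectedness that these exhaust all possibilities and no distance exceeds $3$.
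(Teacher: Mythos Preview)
Your strategy is the same as the paper's at the core: for non-adjacent pairs, build a common neighbour $z$ out of $0$'s and units coordinatewise, and identify $S_r$ as precisely the configuration where this is impossible, then produce an explicit length-$3$ path. The organisation, however, is genuinely different. The paper proceeds by an exhaustive five-case split on the ``type'' of the coordinates of $x$ (all nonzero zero-divisors; a mix of units and zeros; all zero-divisors with a zero entry; etc.), and within each case further splits on $y$, each time writing down an explicit $z$. Your plan instead first reduces to the single assumption $(x)\subseteq(y)$, and then argues uniformly: pick a coordinate $i$ with $y_i$ a non-unit and a distinct coordinate $j$ with $x_j\neq 0$, set $z_i=1$, $z_j=0$; this $z$ works, and such a pair $(i,j)$ fails to exist exactly when $\{i:y_i\notin U\}=\{j:x_j\neq 0\}=\{r\}$, which is $S_r$. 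This buys you a much shorter, more conceptual argument than the paper's case enumeration, at no real cost.

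One point to tighten in execution: your stated reason that no common neighbour exists for $(x,y)\in S_r$ --- that any candidate $z$ is ``forced either to have $(z)\subseteq(x)$ or $(y)\subseteq(z)$'' --- is not quite the right disjunction. The obstruction can instead be $(x)\subseteq(z)$: for $z\sim y$ one needs $(z_r)\nsubseteq(y_r)$, but in a general $\mathbb{Z}_{n_r}$ this does not force $(y_r)\subsetneq(z_r)$ (ideals need not be totally ordered), so the chain $(x_r)\subseteq(y_r)\subseteq(z_r)$ the paper uses is also not literally valid for arbitrary $n_r$. The correct check is that $z\sim x$ forces $(x_r)\nsubseteq(z_r)$ (since $x_i=0$ elsewhere) while $z\sim y$ forces $(z_r)\nsubseteq(y_r)$; one then shows these two, together with $(x_r)\subseteq(y_r)$, are jointly impossible in $\mathbb{Z}_{n_r}$ by looking at the $\gcd$'s with $n_r$. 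Your promised ``short case check'' will surface this, but be aware that the containment you named is not the one that actually fails.
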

\begin{proof}
To prove the result, we discuss the following cases.
    
\noindent\textbf{Case-1.} $x_i \in Z(\mathbb{Z}_{n_i})^{*}$ for each $i \in [k]$. If $x \sim y$ in $\Gamma'(R)$, then $d(x,y) = 1$. Otherwise, either $(x) \subseteq (y)$ or $(y) \subseteq (x)$. Suppose that $y_i \in Z(\mathbb{Z}_{n_i})^{*}$, for each $i \in [k]$. Then for $z = (1, 0, \ldots, 0) \in R$, we obtain $x \sim z \sim y$ in $\Gamma'(R)$. It follows that $d(x, y) = 2$. Now assume that $y_i \in Z(\mathbb{Z}_{n_i})$ for each $i \in [k]$ and $y_j = 0$ for some $j \in [k]$. If $x \nsim y$ in $\Gamma'(R)$, then $(y_i) \subseteq (x_i)$ for each $i$. Choose $z = (z_1, z_2, \ldots, z_k) \in R$ such that $z_i = 0$ whenever $y_i \in Z(\mathbb{Z}_{n_i})^{*}$ and $z_j \in U(\mathbb{Z}_{n_j})$ whenever $y_j = 0$, for some $i, j \in [k]$. Consequently, $x \sim z \sim y$ in $\Gamma'(R)$. It follows that $d(x, y) = 2$. If $y_i \in U(\mathbb{Z}_{n_i})$ and $y_j = 0$, for some $i, j \in [k]$, then note that $d(x, y) = 1$. Now, let $y_i \in U(\mathbb{Z}_{n_i})$ and $y_j \in Z(\mathbb{Z}_{n_i})^{*}$, for some $i, j \in [k]$. If $x \nsim y$ in $\Gamma'(R)$, then $(x_i) \subsetneq (y_i)$ for each $i \in [k]$. Choose $z = (z_1, z_2, \ldots, z_k) \in R$ such that $z_i = 0$ whenever $y_i \in U(\mathbb{Z}_{n_i})^{*}$, and $z_j \in U(\mathbb{Z}_{n_j})$ whenever $y_j \in Z(\mathbb{Z}_{n_j})^{*}$. It follows that $x \sim z \sim y$ in $\Gamma'(R)$ and so $d(x, y) = 2$. Further, assume that $y_i \in U(\mathbb{Z}_{n_i})$ and $y_j \in Z(\mathbb{Z}_{n_j})$ for some $i, j \in [k]$. Then $x \sim y$ in $\Gamma'(R)$ and so $d(x,y) = 1$. 

\noindent\textbf{Case-2.} $x_i \in U(\mathbb{Z}_{n_i})$ and $x_j =0$ for some  $i,j \in [k]$. Suppose $y_{i} \in U(\mathbb{Z}_{n_i})$ and $y_j = 0$ for some $i,j \in [k]$. If $x \sim y$ in $\Gamma'(R)$, then $d(x, y) = 1$. Otherwise, choose $z = (z_1, z_2, \ldots, z_k) \in R$ such that

\[ z_i =
  \begin{cases}
 %0  & \;\; \;\text{when both} x_i, y_i \in U(\mathbb{Z}_{n_i}) \\
1  & \;\;\;\text{when both} ~~x_i = y_i = 0,\\
0 & \;\;\;\text{otherwise}.
\end{cases}\]

It follows that $d(x,y)=2$. Further, suppose that $y_i \in Z(\mathbb{Z}_{n_i})$ for each $i \in [k]$ and $y_j = 0$ for some $j \in [k]$. If $x \nsim y$ in $\Gamma'(R)$, 
then choose $z=(z_1, z_2, \ldots, z_k)$ such that $z_i \in U(\mathbb{Z}_{n_i})$ whenever $y_i = 0$, and $z_j = 0$ whenever $y_j \in  Z(\mathbb{Z}_{n_j})^{*}$. Consequently, $d(x,y) = 2$. Suppose that $y_i \in Z(\mathbb{Z}_{n_i})^{*}$ and $y_j \in  U(\mathbb{Z}_{n_j})$ for some $i,j \in [k]$. If $x \sim y$ in $\Gamma'(R)$, then $d(x,y) =1$. Otherwise, consider $z = (z_1, z_2, \ldots, z_k) \in R$ such that 

\[ z_i =
\begin{cases}
0  & ~~ \textnormal{if } y_i \in U(\mathbb{Z}_{n_i}), \\
1 & ~~ \textnormal{if } y_i \in Z(\mathbb{Z}_{n_i})^{*}.
\end{cases}
\]
Note that $x \sim z \sim y$ in $\Gamma'(R)$. It follows that $d(x, y) = 2$. Assume that $y_i \in U(\mathbb{Z}_{n_i})$ and $y_j \in Z(\mathbb{Z}_{n_j})$ for some $i,j \in [k]$. If $x \nsim y$ in $\Gamma'(R)$, then choose $z = (z_1, z_2, \ldots, z_k) \in R$ such that $z_i \in U(\mathbb{Z}_{n_i})$ whenever $x_i = 0$, and $z_j = 0$ whenever $x_j \in U(\mathbb{Z}_{n_j})$, for some $i, j \in [k]$. Consequently, $d(x,y) = 2$.

\noindent\textbf{Case-3.} $x_i \in Z(\mathbb{Z}_{n_i})$ for each $i \in [k]$ and $x_j = 0$ for some $j \in [k]$. Suppose $y_i \in Z(\mathbb{Z}_{n_i})$ for each $i \in [k]$ and $y_j = 0$ for some $j \in [k]$. If $x \sim y$ in $\Gamma'(R)$, then $d(x, y) =1$. Let $x \nsim y$ in $\Gamma'(R)$. Then choose $z = (z_1, z_2, \ldots, z_k) \in R$ such that $z_i = 0$, whenever $x_i \in Z(\mathbb{Z}_{n_i})^{*}$, and $z_j = 1$, whenever $x_j = 0$ for some $i, j \in [k]$. It follows that $x \sim z \sim y$ and so $d(x, y) =2$. Next, assume that $y_i \in Z(\mathbb{Z}_{n_i})$ and $y_j \in U(\mathbb{Z}_{n_j})$ for some $i, j \in [k]$. If $x \nsim y$ in $\Gamma'(R)$, then choose $z = (z_1, z_2, \ldots, z_k)$ such that $z_i = 1$ when $x_i = 0$, and $z_j = 0$ when $x_j \in Z(\mathbb{Z}_{n_j})^{*}$ for some $i, j \in [k]$. Consequently, we have $x \sim z \sim y$ in $\Gamma'(R)$. It implies that $d(x,y) =2$. Further, assume that $y_i \in U(\mathbb{Z}_{n_i})$ and $y_j \in Z(\mathbb{Z}_{n_j})^{*}$ for some $i, j \in [k]$. Let $x \nsim y$ in $\Gamma'(R)$. Suppose that there exists $r \in [k]$ such that $x_r \in Z(\mathbb{Z}_{n_r})^{*}$ and $x_i = 0$ for each $i \in [k]\setminus \{r\}$. Also, $y_i \in U(\mathbb{Z}_{n_i})$ and $y_r \in Z(\mathbb{Z}_{n_r})^{*}$ for each $i \in [k]\setminus \{r\}$. Then $(x_r) \subsetneq (y_r)$. If there exists $a = (a_1, a_2, \ldots,a_r, \ldots, a_k)$ such that $a \sim y$, then $(y_r) \subsetneq (a_r)$. It follows that $(x_r) \subsetneq (y_r) \subsetneq (a_r)$. Consequently, $a \nsim x$ in $\Gamma'(R)$. Therefore,  $d(x,y) > 2$. Consider $z= (z_1, z_2, \ldots, z_k)$ and $z'= (z_1', z_2', \ldots, z_k') \in R$ such that 
\[ z_i =
\begin{cases}
 1  &  ~~  \textnormal{if } x_i = 0, \\
0 & ~~ \textnormal{if } x_i \in Z(\mathbb{Z}_{n_i})^{*}
\end{cases}
\]
and
\[ z_i' =
\begin{cases}
0  & ~~ \textnormal{if } y_i \in U(\mathbb{Z}_{n_i}), \\
1 & ~~ \textnormal{if } y_i \in Z(\mathbb{Z}_{n_i})^{*}.
\end{cases}
\]
It follows that $x \sim z \sim z' \sim y$ in $\Gamma'(R)$. Therefore, $d(x,y) =3$. Next, we claim that if there exist $t$ and $r \in [k]$ such that $x_t \in Z(\mathbb{Z}_{n_t})^{*}, x_r \in Z(\mathbb{Z}_{n_r})^{*}$ then $d(x,y) \leq 2$. If $x \sim y$ in $\Gamma'(R)$, then $d(x,y) = 1$. Next, assume that $x \nsim y$ in $\Gamma'(R)$. Since $x \nsim y$, we have $(x) \subsetneq (y)$. If there exists $i_1 \in [k]$ such that $x_{i_1}, y_{i_1} \in  Z(\mathbb{Z}_{n_{i_1}})^{*}$ then take $r=i_1$. Now consider $z = (z_1,z_2, \ldots,z_k) \in R$ such that $z_t = 0, z_r = 1$ and, for $i \neq \{t, r\}$ whenever $y_i \in U(\mathbb{Z}_{n_i})$ take $z_i =0$ and, whenever $y_i \in Z(\mathbb{Z}_{n_i})^{*}$ then choose $z_i =1$. It follows that $x \sim z \sim y$ in $\Gamma'(R)$. Therefore, $d(x,y) \leq 2$.

\noindent\textbf{Case-4.} $x_i \in Z(\mathbb{Z}_{n_i})^{*}$ and $x_j \in U(\mathbb{Z}_{n_j})$ for some $i, j \in [k]$. Let $y_i \in Z(\mathbb{Z}_{n_i})^{*}$ and $y_j \in U(\mathbb{Z}_{n_j})$ for some $i, j \in [k]$. If $x \sim y$ in $\Gamma'(R)$, then $d(x,y) =1$. Let $x \nsim y$ in $\Gamma'(R)$. Then choose $z = (z_1, z_2, \ldots, z_k) \in R$ such that $z_i = 0$ whenever $x_i \in U(\mathbb{Z}_{n_i})$, and $z_j = 1$ whenever $x_j  \in Z(\mathbb{Z}_{n_j})^{*}$ for some $i, j \in [k]$. It follows that $d(x,y) = 2$. Next, let $y_i \in Z(\mathbb{Z}_{n_i})$ and $y_j \in U(\mathbb{Z}_{n_j})$ for some $i, j \in [k]$. If $x \nsim y$ in $\Gamma'(R)$, then choose $z= (z_1, z_2, \ldots, z_k)$ such that $z_i = 1$ whenever $x_i \in Z(\mathbb{Z}_{n_i})^{*}$, and $z_j = 0$ whenever $x_j \in U(\mathbb{Z}_{n_j})$ for some $i, j \in [k]$. Therefore, $d(x,y) =2$.

\noindent\textbf{Case-5.} $x_i \in Z(\mathbb{Z}_{n_i})$ and $x_j \in U(\mathbb{Z}_{n_j})$ for some $i, j \in [k]$. Assume that $y_i \in Z(\mathbb{Z}_{n_i})$ and $y_j \in U(\mathbb{Z}_{n_j})$ for some $i, j \in [k]$. If $x \sim y$ in $\Gamma'(R)$, then $d(x,y) =1$. Otherwise, choose $z= (z_1, z_2, \ldots, z_k) \in R$ as follows 

\[ z_i =
\begin{cases}
 0 & ~~ \textnormal{if } x_i \in Z(\mathbb{Z}_{n_i})^{*} ~~\text{and } x_i \in U(\mathbb{Z}_{n_i}), \\
1 & ~~ \textnormal{if } x_i = 0.
\end{cases}
\]
Then $x \sim z \sim y$ in $\Gamma'(R)$. It follows that $d(x,y) = 2$.
\end{proof}

In view of Lemma \ref{distanceinproductofZ_n}, now we calculate the Wiener index of $\Gamma'(R)$. Let $x = (x_1^{j_1}, x_2^{j_2}, \ldots, x_k^{j_k})$ and $y = (y_1^{l_1}, y_2^{l_2}, \ldots, y_k^{l_k})$ be the representatives of two distinct equivalence classes $X_1^{j_1} \times X_2^{j_2} \times \cdots \times X_k^{j_k}$ and $X_1^{l_1} \times X_2^{l_2} \times \cdots \times X_k^{l_k}$, respectively. 

\begin{theorem}
  The Wiener index of the cozero-divisor graph $\Gamma'(R)$, where $R \cong \mathbb{Z}_{p_1^{m_1}} \times \mathbb{Z}_{p_2^{m_2}} \times \cdots \times \mathbb{Z}_{p_k^{m_k}}$, is given below:
  
  \begin{align*}
    W(\Gamma'(R)) &= 2 \sum_{(x_1^{j_1}, x_2^{j_2}, \ldots x_k^{j_k}) \in \Upsilon'} { { \prod_{\substack{i=1 \\ j_i\ge 1}}^{k} (p_i^{m_i - j_i +1} - p_i^{m_i - j_i} ) } \choose 2 } + \sum_{x \sim y} \left( \prod_{\substack{i=1 \\ j_i \ge 1}}^{k} (p_i^{m_i - j_i +1} - p_i^{m_i - j_i } ) \right) \left( \prod_{\substack{i=1 \\ j_i \ge 1}}^{k}(p_i^{m_i - l_i +1} - p_i^{m_i - l_i } ) \right)  \\
     &+ 2 \sum_{\substack{x \nsim y \\ (x,y) \notin S_r}} \left( \prod_{\substack{i=1 \\ j_i \ge 1}}^{k} (p_i^{m_i - j_i +1} - p_i^{m_i - j_i } ) \right) \left( \prod_{\substack{i=1 \\ j_i \ge 1}}^{k}(p_i^{m_i - l_i +1} - p_i^{m_i - l_i } ) \right) \\
     &+ 3 \sum_{{(x,y) \in S_r}} \left( \prod_{\substack{i=1 \\ j_i \ge 1}}^{k} (p_i^{m_i - j_i +1} - p_i^{n_i - j_i} ) \right) \left( \prod_{\substack{i=1 \\ j_i \ge 1}}^{k}(p_i^{m_i - l_i +1} - p_i^{m_i - l_i} ) \right).
  \end{align*}

\end{theorem}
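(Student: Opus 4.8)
The plan is to specialize Theorem~\ref{arbitarywiener} to the ring $R \cong \mathbb{Z}_{p_1^{m_1}} \times \cdots \times \mathbb{Z}_{p_k^{m_k}}$, using the combinatorial description of the equivalence classes that was set up immediately before the statement. First I would recall that every equivalence class of $\equiv$ on $V(\Gamma'(R))$ has the form $X_1^{j_1} \times X_2^{j_2} \times \cdots \times X_k^{j_k}$, indexed by a tuple $(j_1, \ldots, j_k)$ with $0 \le j_i \le m_i$, excluding the all-zero tuple (that is the class $\{0\}$, which is not a vertex) and excluding the all-units tuple (the class $U(R)$). Its cardinality is $\prod_{i=1}^k |X_i^{j_i}|$, and since $|X_i^0| = 1$ while $|X_i^{j_i}| = p_i^{m_i - j_i + 1} - p_i^{m_i - j_i}$ for $j_i \ge 1$, the product over all $i$ collapses to the product over those $i$ with $j_i \ge 1$, which is exactly the expression $\prod_{i,\ j_i \ge 1}^k (p_i^{m_i - j_i + 1} - p_i^{m_i - j_i})$ appearing in the displayed formula. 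So the representative $x = (x_1^{j_1}, \ldots, x_k^{j_k})$ of a class has $|X_1^{j_1} \times \cdots \times X_k^{j_k}|$ equal to that product.

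Next I would substitute these cardinalities into the two sums of Theorem~\ref{arbitarywiener}. The first sum $2\sum \binom{|X_i|}{2}$ becomes exactly the first line of the claimed formula, a sum of $2\binom{\,\cdot\,}{2}$ over all representatives in $\Upsilon'$. For the second sum $\sum_{i<j} |X_i||X_j| \, d_{\Upsilon'(R)}(x_i,x_j)$, the key input is Lemma~\ref{distanceinproductofZ_n}, which tells us that for distinct vertices $x, y$ of $\Gamma'(R)$ the distance $d_{\Gamma'(R)}(x,y)$ takes only the values $1, 2, 3$, according to whether $x \sim y$, or $x \nsim y$ with $(x,y) \notin S_r$, or $(x,y) \in S_r$; and by Lemma~\ref{connectednessof_Upsilon} this distance agrees with $d_{\Upsilon'(R)}(x_i,x_j)$ when $x,y$ lie in distinct classes. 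Thus the sum over unordered pairs of distinct classes splits into three groups — the adjacent pairs contributing weight $1$, the non-adjacent pairs outside any $S_r$ contributing weight $2$, and the pairs in some $S_r$ contributing weight $3$ — giving the last three lines of the formula once each $|X_i|$ is written out as the corresponding product. (The apparent typo $p_i^{n_i - j_i}$ in the last line should read $p_i^{m_i - j_i}$.)

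The one point that needs care, and which I expect to be the main obstacle, is checking that the summation ranges in the displayed formula are really the same as the range $1 \le i < j \le k$ over distinct equivalence classes in Theorem~\ref{arbitarywiener}. Concretely: the three conditions ``$x \sim y$'', ``$x \nsim y,\ (x,y)\notin S_r$'', and ``$(x,y)\in S_r$'' are defined on pairs of \emph{representatives}, and one must verify (i) that they are well defined on classes — i.e. they depend only on the classes of $x$ and $y$, not the chosen representatives — which follows from Corollary~\ref{corllary_wiener_adjacency}(ii) together with the fact that $S_r$ is itself defined through the ideals $(x_r), (y_r)$; (ii) that the three conditions are mutually exclusive and exhaustive among unordered pairs of distinct classes, which is precisely the content of the case analysis in Lemma~\ref{distanceinproductofZ_n}; and (iii) that the set $S_r$, as used in the sum, is to be interpreted as $\bigcup_{r=1}^k S_r$ and that membership is symmetric in $x,y$ up to the obvious reordering (so that summing over unordered pairs is legitimate). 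Once these bookkeeping points are settled, the formula follows by direct substitution, with no further computation required.
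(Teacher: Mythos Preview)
Your proposal is correct and follows essentially the same route as the paper: the paper presents no separate proof, treating the theorem as an immediate specialization of Theorem~\ref{arbitarywiener} once the class sizes $|X_1^{j_1}\times\cdots\times X_k^{j_k}|=\prod_{i,\,j_i\ge 1}(p_i^{m_i-j_i+1}-p_i^{m_i-j_i})$ and the distance trichotomy of Lemma~\ref{distanceinproductofZ_n} are in hand. Your write-up is in fact more careful than the paper's, explicitly flagging the bookkeeping points (well-definedness on classes, exhaustiveness of the three cases, and the implicit union over $r$ in $S_r$) that the paper leaves tacit.
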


\begin{example}
Let $R \cong \mathbb{Z}_2 \times \mathbb{Z}_4 \times \mathbb{Z}_9$. Then $|X_1^0| = 1$, $|X_2^0| = 1$, $|X_3^0| = 1$, $|X_1^1| = 1$, $|X_2^1| = 2$, $|X_3^1| = 6$, $|X_1^2| = 0$, $|X_2^2| = 1$ and $|X_3^2| = 2$. Let $Y_1 = X_1^0 \times X_2^0 \times X_3^1$, $Y_2 = X_1^0 \times X_2^0 \times X_3^2$, $Y_3 = X_1^0 \times X_2^1 \times X_3^0$, $Y_4 = X_1^0 \times X_2^2 \times X_3^0$, $Y_5 = X_1^0 \times X_2^1 \times X_3^1$, $Y_6 = X_1^0 \times X_2^2 \times X_3^1$, $Y_7 = X_1^0 \times X_2^1 \times X_3^2$, $Y_8 = X_1^0 \times X_2^2 \times X_3^2$, $Y_9 = X_1^1 \times X_2^0 \times X_3^0$, $Y_{10} = X_1^1 \times X_2^0 \times X_3^1$, $Y_{11} = X_1^1 \times X_2^0 \times X_3^2$, $Y_{12} = X_1^1 \times X_2^1 \times X_3^0$, $Y_{13} = X_1^1 \times X_2^1 \times X_3^2$, $Y_{14} = X_1^1 \times X_2^2 \times X_3^0$, $Y_{15} = X_1^1 \times X_2^2 \times X_3^1$ and $Y_{16} = X_1^1 \times X_2^2 \times X_3^2$. Then $S_3 = \{ \{ Y_2, Y_{13}\} \}, \ S_2 = \{ \{Y_4, Y_{15}\} \}$ and the pair of sets whose elements are at distance two

$ \{ \{ Y_1, Y_2\}, \{ Y_1, Y_5\}, \ \{ Y_1, Y_6\}, \ \{ Y_1, Y_{10}\}, \ \{ Y_1, Y_{15}\}, \ \{ Y_2, Y_5\}, \ \{ Y_2, Y_6\}, \ \{ Y_2, Y_7\}, \ \{ Y_2, Y_8\}, \ \{ Y_2, Y_{10}\}, \ \{ Y_2, Y_{11}\}, \ \{ Y_2, Y_{15}\}, \\ \{ Y_2, Y_{16}\}, \ \{ Y_3, Y_{4}\}, \ \{ Y_3, Y_{5}\}, \ \{ Y_3, Y_{7}\}, \ \{ Y_3, Y_{12}\}, \ \{ Y_3, Y_{13}\}, \ \{ Y_4, Y_{5}\}, \ \{ Y_4, Y_{6}\}, \ \{ Y_4, Y_{7}\}, \ \{ Y_4, Y_{8}\}, \ \{ Y_4, Y_{12}\}, \ \{ Y_4, Y_{13}\}, \\ \{ Y_4, Y_{14}\}, \ \{ Y_4, Y_{16}\}, \ \{ Y_5, Y_{6}\}, \ \{ Y_5, Y_{7}\}, \ \{ Y_5, Y_{8}\}, \ \{ Y_6, Y_{8}\},  \ \{ Y_6, Y_{15}\}, \ \{ Y_7, Y_{8}\}, \ \{ Y_7, Y_{13}\},  \ \{ Y_8, Y_{13}\}, \ \{ Y_8, Y_{15}\}, \ \{ Y_8, Y_{16}\}, \\ \{ Y_9, Y_{10}\}, \ \{ Y_9, Y_{11}\}, \ \{ Y_9, Y_{12}\}, \ \{ Y_9, Y_{13}\}, \ \{ Y_9, Y_{14}\}, \ \{ Y_9, Y_{15}\}, \ \{ Y_9, Y_{16}\}, \ \{ Y_{10}, Y_{11}\}, \ \{ Y_{10}, Y_{15}\}, \ \{ Y_{11}, Y_{13}\}, \ \{ Y_{11}, Y_{15}\}, \\ \{ Y_{11}, Y_{16}\}, \ \{ Y_{12}, Y_{13}\}, \ \{ Y_{12}, Y_{14}\}, \ \{ Y_{13}, Y_{14}\}, \ \{ Y_{13}, Y_{16}\}, \ \{ Y_{14}, Y_{15}\}, \ \{ Y_{14}, Y_{16}\}, \ \{ Y_{15}, Y_{16}\} \} $.

Thus, the Wiener index of the cozero-divisor graph of the ring $\mathbb{Z}_2 \times \mathbb{Z}_4 \times \mathbb{Z}_9$ is
\begin{align*}
     W(\Gamma'(\mathbb{Z}_2 \times \mathbb{Z}_4 \times \mathbb{Z}_9)) &= 2 \times \dfrac{1}{2} \left[ 30 + 2 + 2 +0 +132+30+12+2+0+30+2+2+12+0+30+2 \right] \\
  &+ [ 6(2+1+4+2+1+2+2+4+1+2) + 2( 2+1+1+2+1)\\
  &+ 2(6+2+1+6+2+1+6+2) +(1+6+2)+ 12(1+6+2+2+4+1+6+2) \\
  &+ 6(4+1+6+2+2+4+1+2) + 4(1+6+2+2+1+6+2) + 2(1+6+2+2+1) + (0) \\
  &+ 6(2+4+1+2) + 2(2+1) + 2(6+2) + 4(6) + (0) ] \\
  &+ 2[ 6(2+12+6+6+6) + 2 (12+6+4+2+6+2+6+2) +2(1+12+4+2+4) \\
  &+ (12+6+4+2+2+4+1+2) +12(6+4+2) + 6(2+6) + 4(2+4) + 2(4+6+2) \\
  &+ (6+2+2+4+1+6+2) + 6(2+6) +2(4+6+2) +2(4+1) +4 (1+2) + (6+2) + 6(2)] \\
  &+ 3 [ (2 \times 4) +  (1 \times 6) ] \\
  &= 2611
\end{align*}
\end{example}

%%%%%%%%%%%%%%%%%%%%%%%%%%%%%%%%%%%%%%%%%%%%%%%%%%%%%%%%%%%%%%%%%%%%%%%%%%%%%%%%%%%%%%%%%%%%%%%%%%%%%%%%%%%%%%%%%%%%%%%%%%

\section{The Wiener Index of the cozero-divisor graph of reduced ring}

In this section, we obtain the Wiener index of the cozero-divisor graph of a finite commutative reduced ring. Let $R$ be a reduced ring i.e. $R \cong F_{q_1} \times F_{q_2} \times \cdots \times F_{q_k}$ with $k \geq 2$, where $F_q$ is a finite field with $q$ elements. Notice that, for $x = (x_1 , x_2, \ldots, x_k)$ and $y = (y_1 , y_2, \ldots, y_k) \in R$ such that $(x) = (y)$, we have $x_i = 0 $ if and only if $y_i = 0$ for each $i$. For $i_1, i_2, \ldots, i_r \in [k]$, define 
\[
X_{\{i_1,i_2, \ldots, i_r\}} = \{ (x_1, x_2, \ldots, x_k) \in R : \text{only} \ x_{i_1}, x_{i_2}, \ldots, x_{i_r} \text{ are non-zero} \}. 
\]

 Note that the sets $X_A$, where $A$ is a non-empty proper subset of $[k]$, are the equivalence classes under the relation $\equiv$. We write $x_A$ by the representative of equivalence class $X_A$. Now we obtain the possible distances between the vertices of $\Upsilon'(R)$.

\begin{lemma}\label{reducedringdistances}
    For the distinct vertices $x_A$ and $x_B$ of $\Upsilon'(R)$ we have
 \[   
 d_{\Upsilon'(R)}(x_A, x_B) = 
 \begin{cases}
  1 & \;\; \text{if} \ A \nsubseteq B \ \text{and} \ B \nsubseteq A \\
  
 2 & \;\; \text{otherwise}.
 \end{cases}
    \]
\end{lemma}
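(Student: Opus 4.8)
The plan is to convert the subset relations $A\subseteq B$ into ideal containments and then read off adjacency from Lemma~\ref{adjacencyofclasses}. First I would record the basic computation in a product ring: for $x=(x_1,\ldots ,x_k)\in R\cong F_{q_1}\times\cdots\times F_{q_k}$ one has $(x)=(x_1)\times\cdots\times(x_k)$, and since each $F_{q_i}$ is a field, $(x_i)=F_{q_i}$ when $x_i\neq 0$ and $(x_i)=0$ when $x_i=0$. Hence for a non-empty proper subset $A\subseteq[k]$ the representative $x_A$ satisfies $(x_A)=\prod_{i\in A}F_{q_i}\times\prod_{i\notin A}\{0\}$, so that $(x_A)\subseteq(x_B)$ if and only if $A\subseteq B$. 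Combining this with Lemma~\ref{adjacencyofclasses} gives: $x_A$ and $x_B$ are adjacent in $\Upsilon'(R)$ if and only if $A\nsubseteq B$ and $B\nsubseteq A$. This immediately yields the first case $d_{\Upsilon'(R)}(x_A,x_B)=1$.

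For the second case, note that $x_A\neq x_B$ forces $A\neq B$, so the negation of the first case is precisely that exactly one of $A\subsetneq B$, $B\subsetneq A$ holds; assume without loss of generality $A\subsetneq B$. By the previous paragraph $x_A\nsim x_B$, so $d_{\Upsilon'(R)}(x_A,x_B)\geq 2$, and it suffices to exhibit a common neighbour. Since $B$ is a proper subset of $[k]$, I would choose $c\in[k]\setminus B$ and take the class $X_{\{c\}}$ with representative $x_{\{c\}}$; because $k\geq 2$, $\{c\}$ is a non-empty proper subset of $[k]$, so $x_{\{c\}}$ is genuinely a vertex of $\Upsilon'(R)$. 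As $c\notin B\supseteq A$ we get $\{c\}\nsubseteq A$ and $\{c\}\nsubseteq B$, while $A,B$ non-empty together with $c\notin A$, $c\notin B$ give $A\nsubseteq\{c\}$ and $B\nsubseteq\{c\}$; by Lemma~\ref{adjacencyofclasses}, $x_{\{c\}}$ is adjacent to both $x_A$ and $x_B$. Thus $x_A\sim x_{\{c\}}\sim x_B$ is a path of length $2$ and $d_{\Upsilon'(R)}(x_A,x_B)=2$.

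The whole argument is essentially bookkeeping about subsets of $[k]$, so I do not expect a serious obstacle. The only place needing a little care is the degenerate situation in which $A$ or $B$ is a singleton: this is exactly why one verifies $A\nsubseteq\{c\}$ from non-emptiness of $A$ plus $c\notin A$, rather than from a cardinality count. As a byproduct, the two cases together re-establish that $\Upsilon'(R)$ — and hence $\Gamma'(R)$ — is connected with diameter at most $2$, consistent with Lemma~\ref{connectednessof_Upsilon}.
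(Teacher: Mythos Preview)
Your proof is correct and follows essentially the same route as the paper: both reduce adjacency to the subset relation via Lemma~\ref{adjacencyofclasses}, and for the case $A\subsetneq B$ both pick an index $c\in[k]\setminus B$ and use the singleton class $x_{\{c\}}$ as a common neighbour. Your write-up simply makes explicit the ideal computation $(x_A)\subseteq(x_B)\iff A\subseteq B$ and the four non-containment checks that the paper leaves implicit.
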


\begin{proof}
First assume that  $A \nsubseteq B \ \text{and} \ B \nsubseteq A$. Then $(x_A) \nsubseteq (x_B) $ and $(x_B) \nsubseteq (x_A)$. It follows that $d_{\Upsilon'}(x_A, x_B)=1 $. Now without loss of generality let $ A \subsetneq B$. Then there exists $i \in [k]$ such that $i \notin B$ and so $i \notin A$. Then by Lemma \ref{adjacencyofclasses}, we have $x_A \sim x_{\{i\}} \sim x_B$. Thus, $d_{\Upsilon'}(x_A, x_B)=2 $.
\end{proof}

For distinct $A,B \subsetneq [k]$, we define $ D_1 = \{  \{A, B\} :  \  A \nsubseteq B \} $ and $ D_2 = \{  \{A, B\} : \  A \subsetneq B  \} $. Using Theorem \ref{arbitarywiener} and the sets $D_1$ and $D_2$, we obtain the Wiener index of the cozero-divisor $\Gamma'(R)$ of a reduced ring $R$ in the following theorem.

\begin{theorem}\label{reducedringwiener}
The Wiener index of the cozero-divisor graph of a finite commutative reduced ring $R \cong F_{q_1} \times F_{q_2} \times \cdots \times F_{q_k}$, $k \geq 2$, is given by
  \begin{align*}
      W(\Gamma'(R)) &= 2 \sum_{A \subset [k]} {\prod_{i \in A}{(q_i -1)} \choose 2} + \sum_{\{A,B\} \in D_1} {\left( \prod_{i \in A}(q_i -1) \right) \left( \prod_{j \in B}(q_j -1) \right)} \\
      &+ 2 \sum_{\{A,B\} \in D_2} {\left( \prod_{i \in A}(q_i -1) \right) \left( \prod_{j \in B}(q_j -1) \right).}
 \end{align*}
\end{theorem}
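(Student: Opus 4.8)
The plan is to apply Theorem~\ref{arbitarywiener} directly, using the explicit description of the equivalence classes and the distance function for the reduced ring that were set up just before the statement. Recall that the equivalence classes of $\equiv$ on $V(\Gamma'(R))$ are exactly the sets $X_A$ indexed by the non-empty proper subsets $A \subsetneq [k]$, and that $|X_A| = \prod_{i \in A}(q_i - 1)$ since each non-zero coordinate $x_i$ ranges freely over $F_{q_i}^{*}$ while the remaining coordinates are $0$. Substituting $|X_A| = \prod_{i \in A}(q_i-1)$ into the first sum of Theorem~\ref{arbitarywiener} immediately produces the term $2\sum_{A \subset [k]} \binom{\prod_{i \in A}(q_i-1)}{2}$.

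For the second sum in Theorem~\ref{arbitarywiener}, I would split the unordered pairs $\{A, B\}$ of distinct proper subsets into the two cases governed by Lemma~\ref{reducedringdistances}: either $A \nsubseteq B$ and $B \nsubseteq A$, in which case $d_{\Upsilon'(R)}(x_A, x_B) = 1$, or one is strictly contained in the other, in which case the distance is $2$. The first case is precisely the set $D_1$ and the second is precisely $D_2$ (note that $D_1$ is stated as $\{\{A,B\}: A \nsubseteq B\}$, which for an unordered pair of \emph{distinct} sets is equivalent to "$A \nsubseteq B$ and $B \nsubseteq A$", and $D_2$ collects the comparable pairs). Writing $d_{\Upsilon'(R)}(x_A,x_B)\cdot |X_A||X_B|$ as $|X_A||X_B|$ over $D_1$ and $2|X_A||X_B|$ over $D_2$, and again replacing $|X_A||X_B|$ by $\bigl(\prod_{i \in A}(q_i-1)\bigr)\bigl(\prod_{j \in B}(q_j-1)\bigr)$, yields exactly the remaining two terms in the claimed formula.

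The only points requiring genuine care — rather than routine substitution — are the bookkeeping issues: first, confirming that $\Gamma'(R)$ is connected so that Theorem~\ref{arbitarywiener} applies at all, which follows from Lemma~\ref{connectednessof_Upsilon} together with Lemma~\ref{reducedringdistances} (the diameter of $\Upsilon'(R)$ is at most $2$, hence it is connected, using $k \geq 2$ so that there is more than one class); and second, verifying that $D_1$ and $D_2$ partition the set of all unordered pairs of distinct proper subsets of $[k]$. The latter is the analogue of the dichotomy "incomparable or comparable" and is exactly what Lemma~\ref{reducedringdistances} records, so no two-valued distance is double-counted. I expect the main (mild) obstacle to be making sure the index ranges in the displayed sums match: the outer sum $\sum_{A \subset [k]}$ must run over non-empty proper subsets (the empty set gives the zero element, which is not a vertex, and $A = [k]$ gives units, also not vertices), and this convention should be stated so that, e.g., $\binom{\prod_{i\in A}(q_i-1)}{2}$ is only formed for legitimate classes. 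Once these conventions are fixed, the proof is a one-line invocation of Theorem~\ref{arbitarywiener} with the substitutions above.
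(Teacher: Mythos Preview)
Your proposal is correct and follows exactly the paper's approach: the paper's own proof is the single line ``The proof follows from Lemma~\ref{reducedringdistances},'' which is precisely the substitution of the class sizes $|X_A|=\prod_{i\in A}(q_i-1)$ and the distances from Lemma~\ref{reducedringdistances} into Theorem~\ref{arbitarywiener}, together with the partition of pairs into $D_1$ and $D_2$. Your additional remarks on connectedness and the index conventions are helpful clarifications but do not depart from the paper's argument.
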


\begin{proof}
The proof follows from Lemma \ref{reducedringdistances}.
\end{proof}

\begin{example}{{\cite[Corollary 6.2]{a.mathil2022cozero}}}
Let $R = \mathbb{Z}_{pq} \cong \mathbb{Z}_p \times \mathbb{Z}_q $, where $p,q$ are distinct prime numbers. Then we have two distinct equivalence classes, $X_{\{1\}} = \{ (a,0) : a \in \mathbb{Z}_p \setminus \{0\} \}$ and $X_{\{2\}} = \{ (0,b) : b \in \mathbb{Z}_q \setminus \{0\} \}$, of the equivalence relation $\equiv$. Moreover, $D_1 = \{ \{\{1\}, \{2\}\}\}$  and $D_2 = \{ \ \} $. Note that $|X_{\{1\}}| = p-1$ and $|X_{\{2\}}| = q-1$. Consequently, by Theorem \ref{reducedringwiener}, we get $W(\Gamma'(\mathbb{Z}_{pq})) = (p-1)(p-2) + (q-1)(q-2) + (p-1)(q-1) = p^2 + q^2 -4p -4q + pq +5$.

\end{example}

\begin{example}
Let $R = \mathbb{Z}_{pqr} \cong \mathbb{Z}_p \times \mathbb{Z}_q \times \mathbb{Z}_r$, where $p,q,r$ are distinct prime numbers. For $a \in \mathbb{Z}_p \setminus \{0\}$, $b \in \mathbb{Z}_q \setminus \{0\} $ and $c \in \mathbb{Z}_q \setminus \{0\}$, we have the equivalence classes : $X_{\{1\}} = \{ (a,0,0) \}, \ X_{\{2\}} = \{ (0,b,0) \}, \ X_{\{3\}} = \{ (0,0,c) \}, \ X_{\{1,2\}} = \{ (a,b,0) \}, \ X_{\{1,3\}} = \{ (a,0,c) \}, \ X_{\{2,3\}} = \{ (0,b,c) \} $. Moreover, $D_1 = \bigl\{ \{ \{ 1\}, \{ 2\}\}, \ \{ \{ 1\}, \{ 3\}\}, \ \{ \{ 2\}, \{ 3\}\}, \ \{ \{1, 2\}, \{ 1,3\}\},\\ 
\{ \{ 1,2\}, \{ 2,3\}\}, \ \{ \{1,3\}, \{ 2,3\}\}, \ \{ \{ 1\}, \{ 2,3\}\}, \ \{ \{ 2\}, \{ 1,3\}\}, \ \{ \{ 3 \}, \{ 1,2\}\} \bigr\}$ and $D_2 = \bigl\{ \{ \{ 1\}, \{ 1,2\}\}, \ \{ \{ 1\}, \{ 1,3\}\}, \\ \{ \{ 2\}, \{ 1,2\}\}, \ \{ \{ 2\}, \{ 2,3\}\},\ 
\{ \{ 3\}, \{ 1,3\}\}, \ \{ \{ 3\}, \{ 2,3\}\} \bigr\}$. Also, $|X_{\{1\}}| = (p-1)$, $|X_{\{2\}}| = (q-1)$, $|X_{\{3\}}| = (r-1)$, $|X_{\{1,2\}}| = (p-1)(q-1)$, $|X_{\{1,3\}}| = (p-1)(r-1)$, $|X_{\{2,3\}}| = (q-1)(r-1)$. Then, by Theorem \ref{reducedringwiener}, the Wiener index of $\Gamma'(R)$ is given by

\begin{align*}
    W(\Gamma'(\mathbb{Z}_{pqr})) &= 2{p-1 \choose 2} +  2{q-1 \choose 2} + 2{r-1 \choose 2} + 2{(p-1)(q-1) \choose 2} + 2{(p-1)(r-1) \choose 2} + 2{(q-1)(r-1) \choose 2} \\
    &+ (p-1)(q-1) + (p-1)(r-1) + (q-1)(r-1) + (p-1)(q-1)(p-1)(r-1) + (p-1)(q-1)(q-1)(r-1) \\
    &+ (p-1)(r-1)(q-1)(r-1) + (p-1)(q-1)(r-1) + (q-1)(p-1)(r-1) + (r-1)(p-1)(q-1) \\
    &+ 2(p-1)\left[ (p-1)(q-1) \right]+ 2(p-1)\left[ (p-1)(r-1) \right]+ 2(q-1)\left[ (p-1)(q-1) \right] + 2(q-1)\left[ (q-1)(r-1) \right] \\
    &+ 2(r-1)\left[ (p-1)(r-1) \right]+ 2(r-1)\left[ (q-1)(r-1) \right]. 
\end{align*}

simplifying this expression, we get
\[
W(\Gamma'(\mathbb{Z}_{pqr})) = pqr(p + q + r -3) + p^2q^2 + p^2r^2 + q^2r^2 -p^2(q+r) - q^2(p+r) -r^2(p+q) - 2(pq +pr +qr) +4(p+q+r) - 3.
\]
\end{example}

\textbf{Remark:} For the ring $\mathbb{Z}_{n}$ of integers modulo $n$, the equivalence classes with respect to the relation $\equiv$ are the sets $\mathcal{A}_{d_{i}}$, where $d_i$'s are the proper divisors of $n$ and $\mathcal{A}_{d_{i}} = \{ x \in \mathbb{Z}_{n}: \text{gcd}(x,n) = d_{i}\}$. It is known that $|\mathcal{A}_{d_{i}}| = \phi(\frac{n}{d_i})$ for $1 \leq i \leq k$, where $\phi$ is Euler totient function (see \cite{young2015adjacency}). Thus, we have the following corollaries of Theorem \ref{arbitarywiener}.

% , the Wiener index by these sets has been derived in \cite{a.mathil2022cozero}.

\begin{corollary}{{\cite[Theorem 6.1]{a.mathil2022cozero}}}
 For $n = p_1p_2 \cdots p_k$, where $p_i$'s are distinct primes and $2 \le k \in \mathbb{N}$, we have \[W(\Gamma'(\mathbb{Z}_n)) = \sum_{i=1}^{2^k-2} \phi(\dfrac{n}{d_i})\left(\phi(\dfrac{n}{d_i})-1\right) \ + \  \dfrac{1}{2}\sum_{\substack{d_i \nmid d_j \\ d_j \nmid d_i}} \phi(\dfrac{n}{d_i})\phi(\dfrac{n}{d_j}) \ + \ 2\sum_{\substack{d_i \mid d_j \\ i\neq j}} \phi(\dfrac{n}{d_i})\phi(\dfrac{n}{d_j}),\] 
 where $d_i$'s are the proper divisors of $n$.
\end{corollary}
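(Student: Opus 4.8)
The plan is to specialize Theorem~\ref{arbitarywiener} to the ring $\mathbb{Z}_n$ with $n = p_1 p_2 \cdots p_k$ squarefree, and show that the three sums appearing there match the three sums in the corollary. The first step is to identify the equivalence classes: as recorded in the Remark preceding the statement, the classes of $\equiv$ on $V(\Gamma'(\mathbb{Z}_n))$ are exactly the sets $\mathcal{A}_{d_i} = \{x \in \mathbb{Z}_n : \gcd(x,n) = d_i\}$ where $d_1, d_2, \ldots$ range over the proper divisors of $n$ (that is, all divisors except $1$ and $n$, since $\gcd(x,n)=1$ gives units and $\gcd(x,n)=n$ gives $x=0$, neither of which are vertices), and $|\mathcal{A}_{d_i}| = \phi(n/d_i)$. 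Since $n$ is squarefree with $k$ prime factors it has $2^k$ divisors, hence $2^k - 2$ proper divisors, which is why the first sum runs to $2^k-2$.

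The second step is to translate the containment of principal ideals into divisibility of the gcd-representatives. For $a$ with $\gcd(a,n)=d_i$ one has $(a) = (d_i)$ in $\mathbb{Z}_n$, and $(d_i) \subseteq (d_j)$ in $\mathbb{Z}_n$ if and only if $d_j \mid d_i$ (as integer divisors of $n$). Thus by Lemma~\ref{adjacencyofclasses}, representatives $x_i \in \mathcal{A}_{d_i}$ and $x_j \in \mathcal{A}_{d_j}$ are adjacent in $\Upsilon'(\mathbb{Z}_n)$ precisely when $d_i \nmid d_j$ and $d_j \nmid d_i$, giving distance $1$; otherwise one of them divides the other and a short argument (identical to that in Lemma~\ref{reducedringdistances}) shows the distance is exactly $2$ — indeed if $d_j \mid d_i$ then $d_i/d_j$ is a nonunit nonzero divisor class not contained in either, serving as a common neighbour, and the distance cannot be $1$. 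So $d_{\Upsilon'}(x_i,x_j) \in \{1,2\}$, with value determined by the divisibility relation between $d_i$ and $d_j$.

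The third step is purely bookkeeping: substitute $|X_i| = \phi(n/d_i)$ and the distance values into the formula of Theorem~\ref{arbitarywiener}. The term $2\sum \binom{|X_i|}{2}$ becomes $2 \sum_{i=1}^{2^k-2} \binom{\phi(n/d_i)}{2} = \sum_{i=1}^{2^k-2} \phi(n/d_i)(\phi(n/d_i)-1)$, matching the first summand. The pairs $\{i,j\}$ with $d_i \nmid d_j$ and $d_j \nmid d_i$ contribute $\sum |X_i||X_j| \cdot 1$; writing this as a sum over ordered pairs and halving gives $\tfrac12 \sum_{d_i \nmid d_j,\, d_j \nmid d_i} \phi(n/d_i)\phi(n/d_j)$, the second summand. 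The remaining pairs are those with $d_i \mid d_j$ (or $d_j \mid d_i$), $i \neq j$, contributing distance $2$; since this divisibility condition is already expressed over ordered pairs $(i,j)$ with $d_i \mid d_j$, $i\neq j$, the contribution is $2 \sum_{d_i \mid d_j,\, i \neq j} \phi(n/d_i)\phi(n/d_j)$, the third summand.

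There is essentially no obstacle here — the only point requiring a word of care is the indexing convention (unordered versus ordered pairs, and the factor of $\tfrac12$), and making sure the divisibility-of-divisors dictionary for ideal containment in $\mathbb{Z}_n$ is stated correctly; both are routine. I would present the proof as: recall the classes and their sizes from the Remark, invoke Lemma~\ref{adjacencyofclasses} to get the distance-$1$ criterion, note the distance-$2$ case exactly as in Lemma~\ref{reducedringdistances}, and then plug into Theorem~\ref{arbitarywiener} and reorganize the three sums.
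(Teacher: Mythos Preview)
Your overall approach is correct and matches the paper's: the corollary is simply Theorem~\ref{arbitarywiener} specialized to $\mathbb{Z}_n$ with $n$ squarefree, using the description of the equivalence classes $\mathcal{A}_{d_i}$ and their sizes $\phi(n/d_i)$ from the Remark, together with the distance computation of Lemma~\ref{reducedringdistances} (which applies since $\mathbb{Z}_n \cong \mathbb{Z}_{p_1}\times\cdots\times\mathbb{Z}_{p_k}$ is reduced). The bookkeeping with ordered versus unordered pairs is handled correctly.

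There is, however, a slip in your parenthetical construction of a common neighbour. If $d_j \mid d_i$ with $i \neq j$, then $d_i/d_j$ divides $d_i$, so $(d_i) \subseteq (d_i/d_j)$ and hence $d_i/d_j$ is \emph{not} adjacent to the representative of $\mathcal{A}_{d_i}$ in $\Gamma'(\mathbb{Z}_n)$; it cannot serve as a common neighbour. The argument of Lemma~\ref{reducedringdistances}, translated into divisor language, instead produces the following: since $d_j$ is a proper divisor of $n$, pick a prime $p_\ell \mid d_j$; then $n/p_\ell$ is a proper divisor of $n$ which neither divides nor is divisible by $d_i$ or $d_j$ (its prime support misses only $p_\ell$, while both $d_i$ and $d_j$ contain $p_\ell$ and omit at least one other prime), so $\mathcal{A}_{n/p_\ell}$ gives the required common neighbour. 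With this correction your proof goes through exactly as written.
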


Let $\tau(n)$ be the number of divisors of $n$ and let $D= \{ d_1, d_2, \ldots ,d_{\tau(n)-2} \}$ be the set of all proper divisors of $n = p_1^{n_1}p_2^{n_2}\cdots p_r^{n_r} \cdots p_k^{n_k}$ with $k \geq 2$. If $d_i \mid d_j$, then define
\begin{align*}
A &= \{ (d_i, d_j) \in D \times D \  | \ d_i \neq p_r^s \}; \\
B &= \{ (d_i, d_j) \in D \times D \  | \ d_i = p_r^s \ \text{and} \ \dfrac{n}{d_j} \neq p_r^t \}; \\
C &= \{ (d_i, d_j) \in D \times D \  | \ d_i = p_r^s \ \text{and} \  \dfrac{n}{d_j} = p_r^t \}.
\end{align*}

\begin{corollary}{{\cite[Theorem 6.3]{a.mathil2022cozero}}} With the above defined notations, for $n = p_1^{n_1}p_2^{n_2}\cdots p_r^{n_r} \cdots p_k^{n_k}$ with $k \geq 2$ and $p_i$'s are distinct primes, we have
 \[W(\Gamma'(\mathbb{Z}_n)) = \sum_{i=1}^{\tau (n)-2} \phi(\dfrac{n}{d_i}) \left(\phi(\dfrac{n}{d_i}) -1 \right) \ + \ \dfrac{1}{2}\sum_{\substack{d_i \nmid d_j \\ d_j \nmid d_i}}\phi(\dfrac{n}{d_i})\phi(\dfrac{n}{d_j}) \ + \ 2\sum_{(d_i, d_j) \in A}\phi(\dfrac{n}{d_i})\phi(\dfrac{n}{d_j}) 
 + \ 2\sum_{(d_i, d_j) \in B}\phi(\dfrac{n}{d_i})\phi(\dfrac{n}{d_j})\]  
 \[ \hspace{-10cm} + ~~~         3\sum_{(d_i, d_j) \in C}\phi(\dfrac{n}{d_i})\phi(\dfrac{n}{d_j}).\]

\end{corollary}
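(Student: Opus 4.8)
The plan is to specialize Theorem~\ref{arbitarywiener} to $R=\mathbb{Z}_n$. By the Remark preceding the statement, the equivalence classes of $\equiv$ on $V(\Gamma'(\mathbb{Z}_n))$ are exactly the sets $\mathcal{A}_{d_1},\dots,\mathcal{A}_{d_{\tau(n)-2}}$ indexed by the proper divisors of $n$, and $|\mathcal{A}_{d_i}|=\phi(n/d_i)$. Substituting this into Theorem~\ref{arbitarywiener}, the first summand becomes $2\sum_i\binom{\phi(n/d_i)}{2}=\sum_{i=1}^{\tau(n)-2}\phi(n/d_i)\bigl(\phi(n/d_i)-1\bigr)$, so the whole task reduces to evaluating $d_{\Upsilon'(\mathbb{Z}_n)}(x_{d_i},x_{d_j})$ for every pair of proper divisors and collecting these pairs according to the distance obtained.

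The first step is to rephrase adjacency in $\Upsilon'(\mathbb{Z}_n)$ as a divisibility condition: in $\mathbb{Z}_n$ one has $(d_i)\subseteq(d_j)$ if and only if $d_j\mid d_i$, so Lemma~\ref{adjacencyofclasses} gives $x_{d_i}\sim x_{d_j}$ exactly when $d_i\nmid d_j$ and $d_j\nmid d_i$. Such \emph{incomparable} pairs lie at distance $1$ and produce the term $\tfrac12\sum_{d_i\nmid d_j,\ d_j\nmid d_i}\phi(n/d_i)\phi(n/d_j)$, the factor $\tfrac12$ arising from the passage from ordered to unordered pairs. For a \emph{comparable} pair, say $d_i\mid d_j$ with $d_i\neq d_j$, the two vertices are non-adjacent, so the distance is $2$ or $3$. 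The crucial reduction is that, since $d\mid d_i\Rightarrow d\mid d_j$ and $d_j\mid d\Rightarrow d_i\mid d$, a common neighbour of $x_{d_i}$ and $x_{d_j}$ in $\Upsilon'$ is the class of a divisor $d$ of $n$ with $d\nmid d_j$ and $d_i\nmid d$, and any such $d$ is necessarily a proper divisor (otherwise $d=1$ divides $d_j$, or $d=n$ is divided by $d_i$). Hence the distance equals $2$ when such a $d$ exists and $3$ otherwise, in the latter case a walk of length $3$ being needed to certify that it is not larger.

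Writing $\nu_p(\cdot)$ for the exponent of a prime $p$, I would then prove the trichotomy. If $d_i$ is divisible by at least two distinct primes, so $(d_i,d_j)\in A$, choose a prime $p_c\mid n$ with $\nu_{p_c}(d_j)<\nu_{p_c}(n)$ — possible since $d_j\neq n$ — and take $d=p_c^{\,\nu_{p_c}(d_j)+1}$: then $d\nmid d_j$ while $d_i\nmid d$ because $d$ is a prime power and $d_i$ is not, so the distance is $2$. If $d_i=p_r^{\,s}$ but $n/d_j$ is not a power of $p_r$, so $(d_i,d_j)\in B$, then some prime $p_{r'}\neq p_r$ divides $n/d_j$ and $d=p_{r'}^{\,\nu_{p_{r'}}(d_j)+1}$ works the same way, again giving distance $2$. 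Finally, if $d_i=p_r^{\,s}$ and $n/d_j=p_r^{\,t}$, so $(d_i,d_j)\in C$, then $d_i\mid d_j$ forces $s+t\le\nu_{p_r}(n)$, and every divisor $d$ of $n$ with $d\nmid d_j$ satisfies $\nu_{p_r}(d)\ge\nu_{p_r}(n)-t+1\ge s+1$, hence $d_i\mid d$; thus there is no common neighbour and the distance is at least $3$, with equality witnessed by the walk $x_{d_i}\sim x_{p_{r'}}\sim x_{p_r^{\,\nu_{p_r}(n)}}\sim x_{d_j}$ for any other prime $p_{r'}\mid n$ (the three adjacencies being immediate, the last one using $t\ge 1$, i.e.\ $d_j\neq n$). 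Feeding distance $2$ on $A\cup B$ and distance $3$ on $C$ into the second summand of Theorem~\ref{arbitarywiener} — each comparable unordered pair being recorded exactly once as the ordered pair $(d_i,d_j)$ with $d_i\mid d_j$ — yields the stated formula; the explicit walks above also re-establish that $\Gamma'(\mathbb{Z}_n)$ is connected, so that Theorem~\ref{arbitarywiener} applies.

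The main obstacle, I expect, will be the case analysis separating distance $2$ from distance $3$: the positive side (cases $A$ and $B$) requires the right choice of prime and exponent for the common neighbour and a check that it is a genuine vertex, while the negative side (case $C$) needs the valuation inequality $s+t\le\nu_{p_r}(n)$ to eliminate \emph{every} candidate intermediate divisor, after which one still has to exhibit a concrete length-$3$ path. The remaining points — distinguishing ordered from unordered pairs, tracking the factors of $\tfrac12$, and noting that within-class pairs are already accounted for by the first summand — are routine bookkeeping.
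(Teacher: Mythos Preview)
Your proposal is correct. The paper itself gives no proof here: the corollary is simply stated as a consequence of Theorem~\ref{arbitarywiener} together with the Remark identifying the equivalence classes $\mathcal{A}_{d_i}$, with the detailed distance computation deferred to the cited source \cite{a.mathil2022cozero}. Your argument supplies exactly what the paper omits, namely a self-contained determination of $d_{\Upsilon'(\mathbb{Z}_n)}(x_{d_i},x_{d_j})$ for comparable pairs, and the case split into $A$, $B$, $C$ together with the valuation argument for case $C$ is carried out correctly.

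One small remark on presentation: the paper's only related computation is Lemma~\ref{distanceinproductofZ_n}, which handles the product ring $\mathbb{Z}_{p_1^{m_1}}\times\cdots\times\mathbb{Z}_{p_k^{m_k}}$; via the Chinese Remainder isomorphism the set $S_r$ there corresponds exactly to your set $C$, so an alternative (and equally valid) route would be to invoke that lemma rather than redo the divisor analysis inside $\mathbb{Z}_n$. Your direct divisibility approach is arguably cleaner for this particular ring, since it avoids translating the coordinate conditions of $S_r$ back into statements about $d_i$ and $n/d_j$. Either way, once the distances are known your bookkeeping with ordered versus unordered pairs is correct, and the factors of $2$ and $3$ land where they should.
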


%%%%%%%%%%%%%%%%%%%%%%%%%%%%%%%%%%%%%%%%%%%%%%%%%%%%%%%%%%%%%%%%%%%%%%%%%%%%%%%%%%%%%%%%%%%%%%%%%%%%%%%%%%%%%%%%%%%%%%%%%%%%%%

\section{SageMath Code } 

In this section, we produce a SAGE code to compute the Wiener index of the cozero-divisor graph of ring classes considered in this paper including the ring $\mathbb{Z}_n$ of integers modulo $n$. On providing the value of integer $n$, the following SAGE code computes the Wiener index of the graph $\Gamma'(\mathbb{Z}_n)$.

\vspace{.3cm}
\lstset{language=Python}
\begin{lstlisting}
cozero_divisor_graph=Graph()
E=[]
n=72

for i in range(n):
    for j in range(n):
        if(i!=j):
            p=gcd(i,n)
            q=gcd(j,n)
            if (p%q!=0 and q%p!=0):
                E.append((i,j))
           
cozero_divisor_graph.add_edges(E)

if(E==[]): 
    V=[]
    for i in range(1,n):
        if (gcd(i,n)!=1):
            V.append(i)
    cozero_divisor_graph.add_vertices(V)
    
    
W=cozero_divisor_graph.wiener_index();

if (W==oo):
    print("Wiener Index undefined for Null Graph")
else :
    print("Wiener Index:", W)

\end{lstlisting}

Using the given code, in the Table \ref{Z_nwiener}, we obtain the Wiener index of $\Gamma'(\mathbb{Z}_n)$ for some values of $n$.

\vspace{.5cm}
 \begin{center}
\begin{tabular}{|c| c| c| c| c | c | c |}
\hline
 $n$ & $100$ & $500$ & $1000$ & $1500$ & $2000$ & $2500$ \\
 \hline
 $W(\Gamma'(\mathbb{Z}_n))$ & $2954$ & $77174$ & $306202$ & $930248$ & $1222530$ & $1946274$  \\
\hline
\end{tabular}

\vspace{.2cm}
\captionof{table}{Wiener index of $\Gamma'(\mathbb{Z}_n$)}
\label{Z_nwiener}
\end{center}

\vspace{0.5cm}
Let $R$ be a reduced ring i.e. $R \cong F_{q_1} \times F_{q_2} \times \cdots \times F_{q_n} $, where $F_{q_i}$ is a field with $q_i$ elements. The following code determines the Wiener index of $\Gamma'(R)$ on providing the values of the field size $q_i$ ($1 \le i \le n$).

\vspace{.3cm}
\lstset{language=Python}
\begin{lstlisting}
field_orders=[3,5,7]
P=Subsets(range(len(field_orders)))[1:-1]
P=[Set(i) for i in P]

D1=[]
D2=[]
for i in P:
    for j in P:
        if (not(i.issubset(j) or j.issubset(i)) and P.index(i) > P.index(j)):
            D1.append([i,j])
        if (i.issubset(j) and i!=j):
            D2.append([i,j])

partial_sum=0
for i in P:
    sum_pp=1
    for j in i:
        sum_pp *= field_orders[j]-1
    partial_sum +=((sum_pp*(sum_pp-1))/2)
    
D1_sum=0
for i in D1:
    D1_pp=1
    for j in i[0]:
        D1_pp *= field_orders[j]-1
    for k in i[1]:
        D1_pp *= field_orders[k]-1
    D1_sum += D1_pp
    
D2_sum=0
for i in D2:
    D2_pp=1
    for j in i[0]:
        D2_pp *= field_orders[j]-1
    for k in i[1]:
        D2_pp *= field_orders[k]-1
    D2_sum += D2_pp
    
W = 2*partial_sum + D1_sum + 2*D2_sum
print("Wiener Index:" , W)


\end{lstlisting}

Using the given code, in the following tables, we obtain the Wiener index of the cozero-divisor graphs of the reduced rings $F_{q_1} \times F_{q_2}$ (see Table \ref{F1F2wiener}) and $F_{q_1} \times F_{q_2} \times F_{q_3}$ (see Table \ref{F1F2F3wiener}), respectively. 

\vspace{.5cm}
 \begin{center}
\begin{tabular}{|c| c| c| c| c | c | c |}
\hline
 $(q_1, q_2)$ & $(9, 25)$ & $(49, 81)$ & $(101, 121)$ & $(125, 139)$ & $(163, 169)$ & $(289, 343)$ \\
 \hline
 $W(\Gamma'(F_{q_1} \times F_{q_2}))$ & $800$ & $12416$ & $36180$ & $51270$ & $81354$ & $297774$  \\
\hline
\end{tabular}

\vspace{.2cm}
\captionof{table}{Wiener index of $\Gamma'(F_{q_1} \times F_{q_2})$}
\label{F1F2wiener}
\end{center}

\vspace{0.5cm}
\begin{center}
\begin{tabular}{|c| c| c| c| c | c | c |}
\hline
 $(q_1, q_2, q_3)$ & $(7,8, 13)$ & $(9, 25, 49)$ & $(53, 64, 81)$ & $(83, 101, 121)$ & $(125, 131, 169)$ & $(289, 343, 361)$ \\
 \hline
 $W(\Gamma'(F_{q_1} \times F_{q_2} \times F_{q_3}))$ & $35196$ & $2500400$ & $108637254$ & $620456582$ & $2355211790$ & $71251552134$  \\
\hline
\end{tabular} 
\vspace{.2cm}
\captionof{table}{Wiener index of $\Gamma'(F_{q_1} \times F_{q_2} \times F_{q_3})$}
\label{F1F2F3wiener}
\end{center}

\vspace{0.5cm}
Let $R \cong \mathbb{Z}_{p_1^{m_1}} \times \mathbb{Z}_{p_2^{m_2}} \times \cdots \times \mathbb{Z}_{p_k^{m_k}}$. Then the following SAGE code gives the value of $W(\Gamma'(R))$ after providing the values of $p_i^{m_i}$($1 \le i \le k)$, where each $p_i$ is a prime.

\vspace{0.5cm}
\lstset{language=Python}
\begin{lstlisting}
orders = [2,4,9]
A = cartesian_product([range(i) for i in orders]).list()
units = [{i for i in range(1,j) if gcd(i,j) == 1} for j in orders]

def contQ(lst1, lst2):
    flag = True
    for i in range(len(orders)):
        p=gcd(lst1[i],orders[i])
        q=gcd(lst2[i],orders[i])
        if(not(lst1[i]==0 or {lst2[i]}.issubset(units[i]) or p%q==0)):
            flag = False
    return flag

E=[]
for i in A:
    for j in A:
        if(not(contQ(i,j) or contQ(j,i))and A.index(i) > A.index(j)):
            E.append([i,j])
        
G = Graph()
G.add_edges(E)
W=G.wiener_index()
print("Wiener_Index:", W)
\end{lstlisting}

Using the given code, we obtain the Wiener index of the cozero-divisor graph of the ring $R\cong \mathbb{Z}_{p_1^{m_1}} \times \mathbb{Z}_{p_2^{m_2}} \times \cdots \times \mathbb{Z}_{p_k^{m_k}}$ (see Table \ref{Z_nproductwiener}).

\vspace{0.5cm}
\begin{center}
\begin{tabular}{|c| c|}
\hline
 $ R$ & $ W(\Gamma'(R))$ \\
 \hline
 $ \mathbb{Z}_{4} \times \mathbb{Z}_{9}$  & $420$ \\
\hline
 $ \mathbb{Z}_{9} \times \mathbb{Z}_{25} $  & $8808$ \\
\hline
$ \mathbb{Z}_{16} \times \mathbb{Z}_{25}$  & $48870$ \\
\hline
$ \mathbb{Z}_{27} \times \mathbb{Z}_{49}$  & $268022$ \\
\hline
$ \mathbb{Z}_{2} \times \mathbb{Z}_{4} \times \mathbb{Z}_{4}$  & $521$ \\
\hline
$ \mathbb{Z}_{5} \times \mathbb{Z}_{7} \times \mathbb{Z}_{11}$  & $14948$ \\
\hline
$ \mathbb{Z}_{8} \times \mathbb{Z}_{9} \times \mathbb{Z}_{16}$  & $167769$ \\
\hline
$ \mathbb{Z}_{4} \times \mathbb{Z}_{9} \times \mathbb{Z}_{25}$  & $327394$ \\
\hline
$ \mathbb{Z}_{2} \times \mathbb{Z}_{4} \times \mathbb{Z}_{9} \times \mathbb{Z}_{9}$ & $232937$ \\
\hline
$ \mathbb{Z}_{3} \times \mathbb{Z}_{4} \times \mathbb{Z}_{8} \times \mathbb{Z}_{8}$ & $333963$ \\
\hline
\end{tabular}

\vspace{.2cm}
\captionof{table}{Wiener index of $\Gamma'(R)$}
\label{Z_nproductwiener}
\end{center}

\vspace{.3cm}
	
\textbf{Acknowledgments:} The first author gratefully acknowledge for providing financial support to CSIR (09/719(0093)/2019-EMR-I) government of India. 
\vspace{.3cm}

\textbf{Conflict of interest:} On behalf of all authors, the corresponding author states that there is no conflict of interest.

 %\bibliographystyle{abbrv}
%\bibliography{ref}

\end{document}